\def\RSthmtxt{theorem~}\newref{thm}{name = \RSthmtxt}}
\def\RSlemtxt{lemma~}\newref{lem}{name = \RSlemtxt}}
\numberwithin{equation}{section}
\numberwithin{figure}{section}
\numberwithin{table}{section}
\theoremstyle{plain}
\newtheorem{thm}{\protect\theoremname}[section]
\theoremstyle{remark}
\newtheorem{rem}[thm]{\protect\remarkname}
\theoremstyle{plain}
\newtheorem{lem}[thm]{\protect\lemmaname}
\theoremstyle{plain}
\newtheorem{cor}[thm]{\protect\corollaryname}
\theoremstyle{plain}
\newtheorem{prop}[thm]{\protect\propositionname}
\theoremstyle{definition}
\newtheorem{defn}[thm]{\protect\definitionname}
\theoremstyle{definition}
\newtheorem{example}[thm]{\protect\examplename}
\theoremstyle{remark}
\newtheorem{conclusion}[thm]{\protect\conclusionname}
\theoremstyle{remark}
\newtheorem*{acknowledgement*}{\protect\acknowledgementname}
\providecommand{\MR}[1]{}
\providecommand{\acknowledgementname}{Acknowledgement}
\providecommand{\conclusionname}{Conclusion}
\providecommand{\corollaryname}{Corollary}
\providecommand{\definitionname}{Definition}
\providecommand{\examplename}{Example}
\providecommand{\lemmaname}{Lemma}
\providecommand{\propositionname}{Proposition}
\providecommand{\remarkname}{Remark}
\providecommand{\theoremname}{Theorem}
\begin{document}
\title{Infinite-dimensional stochastic transforms and reproducing kernel
Hilbert space}
\begin{abstract}
By way of concrete presentations, we construct two infinite-dimensional
transforms at the crossroads of Gaussian fields and reproducing kernel
Hilbert spaces (RKHS), thus leading to a new infinite-dimensional
Fourier transform in a general setting of Gaussian processes. Our
results serve to unify existing tools from infinite-dimensional analysis.
\end{abstract}

\author{Palle E.T. Jorgensen}
\address{(Palle E.T. Jorgensen) Department of Mathematics, The University of
Iowa, Iowa City, IA 52242-1419, U.S.A.}
\email{palle-jorgensen@uiowa.edu}
\author{Myung-Sin Song{*}}
\address{(Myung-Sin Song) Department of Mathematics and Statistics, Southern
Illinois University Edwardsville, Edwardsville, IL 62026, USA}
\email{msong@siue.edu}
\author{James Tian}
\address{(James F. Tian) Mathematical Reviews, 416 4th Street Ann Arbor, MI
48103-4816, U.S.A.}
\email{jft@ams.org}
\keywords{Positive-definite kernels, Fourier analysis, probability, stochastic
processes, reproducing kernel Hilbert space, complex function-theory,
interpolation, signal/image processing, sampling, frames, moments,
machine learning, embedding problems, geometry, information theory,
optimization, algorithms, Kaczmarz, Karhunen--Lo\`eve, factorizations,
splines, Principal Component Analysis, dimension reduction, digital
image analysis, covariance matrix, Gaussian process, mathematical
physics.}
\subjclass[2000]{Primary: 47B32. Secondary: 41A15, 41A65, 42A82, 42C15, 46E22, 47A05,
47N10, 60G15, 62H25, 68T07, 81S05, 90C20, 94A08, 94A12, 94A20.}

\maketitle
\tableofcontents{}

\section{\label{sec:Introduction}Introduction}

Our present paper is motived by a host of applications of the general
theory of kernels, and their corresponding reproducing kernel Hilbert
spaces (RKHS). The list of these applications is long, and includes
both brand new advances, and classical topics in analysis. And the
following areas are included: Statistical inference, achieving separation
of features in machine learning designs, PDEs, harmonic analysis,
stochastic processes, mathematical physics and more\cite{MR4379621,MR4415393,MR4334244}.
While the various reproducing kernel Hilbert spaces involved constitute
versatile tools, their realization has a drawback. Its definition
and realization entails an abstract completion step. To overcome this,
in section \ref{sec:RKHS}, we present a general transform which applies
in the general context of reproducing kernel Hilbert spaces (RKHS)
but more concrete presentations. While in special cases, there are
transforms which produce \textquotedblleft concrete\textquotedblright{}
presentations for a particular RKHS, these transforms tend to be ad
hoc. For the benefit of the readers, we provide some example references:
The paper \cite{MR3857315} deals with sensitivity issues, \cite{MR3883202}
direct integration algorithms, \cite{4106847} image processing, \cite{MR2247587,MR3005666,MR3854652}
pattern recognition and machine learning, \cite{MR2177937} statistical
properties, and finally \cite{MR3534893} on selection of efficient
parameterizations. In addition to earlier papers by the co-authors
\cite{So07,JoSo07a}, we also include here a partial list of other
relevant and current citations; see e.g., the papers mentioned above,
\cite{MR3780557,MR3820672,MR3911884,MR1720704,MR3857315,MR3878657,MR3913046,MR3934645,MR3850675,MR3900802,MR3922239}.
Our present general transform will serve to unify special cases which
have appeared in the literature. In section \ref{sec:infinite}, we
present a new infinite-dimensional Fourier transform in a general
framework of Gaussian processes. The latter in turn is motivated by
applications to stochastic differential equations (SDE), including
generalized Ito calculus formulas. The links between the two main
results involving general transforms are related because of a general
fact about positive definite kernels (and the corresponding RKHSs):
Every positive definite kernel $K$, defined on $D\times D$ where
$D$ is a set, is the covariance kernel of a centered Gaussian process
indexed by $D$. The latter fact then unifies the two settings.

For background references on kernel theory and Gaussian processes,
we refer readers to the following \cite{MR2966130,MR562914,MR3843387,PaSc75},
but we caution that the literature is extensive. The book \cite{MR2239907}
offers a great overview of the subject and its applications. Our main
results presented below build on the following earlier papers \cite{jorgensen2019dimension,jst21pos,MR3843552,MR3888850,MR4020693}
by the co-authors. In addition, there are multiple current and very
active research teams which make use of diverse aspects of the kind
of kernel methods discussed here. We have been especially motivated
by the following recent papers, covering such diverse results as machine
learning, approximations, chaos, noisy data, Markov sampling, and
neural nets, all making use of analysis, algorithms, and optimization
with kernels and RKHSs, especially the papers \cite{MR2558684,MR2327597,MR2186447,MR4367288,MR4321422,MR4241215,MR4091197}.

\section{\label{sec:RKHS}A general transform for reproducing kernel Hilbert
spaces (RKHS).}

We present new frame-theoretic tools with view to learning and sampling
from large data-sets. A key role in this endeavor is played by our
use of infinite-dimensional Kaczmarz algorithms, and an analysis of
systems of projections. However a more realistic approach to an analysis
of large and non-linear data sets must be adapted to include an analysis
of \textquotedblleft noise\textquotedblright{} terms. More precisely,
in order to incorporate stochastic tools, one is faced with choices
among families of Gaussian processes, and probability spaces, and
Karhunen--Loève expansions which will then be adapted to the Hilbert
space analysis we introduced above. Please see \cite{jorgensen2019dimension,jst21pos,MR3843552,MR3888850,MR4020693,JoSo07a}
for examples. Such a stochastic path-space analysis is the focus of
the section below.

By a probability space, we mean a triple $\left(\Omega,\mathscr{F},\mathbb{P}\right)$
where 
\begin{itemize}
\item $\Omega$: set of sample points,
\item $\mathscr{F}$: $\sigma$-algebra of events (subsets of $\Omega$),
\item $\mathbb{P}$: a probability measure defined on $\mathscr{F}$. 
\end{itemize}
A random variable 
\begin{equation}
K:\Omega\rightarrow\mathbb{R}\:\left(\text{\ensuremath{\mathbb{C}}, or a Hilbert space}\right)
\end{equation}
is a measurable function defined on $\left(\Omega,\mathscr{F}\right)$,
i.e., we require that for Borel sets $B$ (in $\mathbb{R}$, or $\mathbb{C}$),
and cylinder sets (referring to a fixed Hilbert space $\mathscr{H}$)
we have $K^{-1}\left(B\right)\in\mathscr{F}$ where 
\begin{equation}
K^{-1}\left(B\right)=\left\{ \omega\in\Omega:K\left(\omega\right)\in B\right\} .
\end{equation}
The distribution $\mu_{K}$ of $K$ is the measure 
\begin{equation}
\mu_{K}\coloneqq\mathbb{P}\circ K^{-1}.
\end{equation}
If $\mu_{K}$ is Gaussian, we say that $K$ is a Gaussian random variable.
A Gaussian process is a system $\left\{ K_{x}:x\in X\right\} $ of
random variables (refer to $\left(\Omega,\mathscr{F},\mathbb{P}\right)$),
indexed by some set $X$. 

Here we shall restrict to the case of a Gaussian process, and we shall
assume 
\begin{equation}
\mathbb{E}\left(K_{x}\right)=0,\quad\forall x\in X;
\end{equation}
where 
\begin{equation}
\mathbb{E}\left(\cdot\right)=\int_{\Omega}\left(\cdot\right)d\mathbb{P}
\end{equation}
denotes expectation w.r.t. $\mathbb{P}$.

If $\mu_{K}\in N\left(0,1\right)$, i.e., 
\begin{equation}
\mu_{K}\left(t\right)=\frac{1}{\sqrt{2\pi}}e^{-t^{2}/2},\quad t\in\mathbb{R},
\end{equation}
we say that $K$ (or $\mu_{K}$) is a standard Gaussian. 

\subsection{Effective sequences}

We now turn to a general framework dealing with an approximation-algorithm
but couched in the context of projections in Hilbert space. Starting
with a sequence $(P_{n})$ of projections, in the premise of Theorem
\ref{thm:pc3}, we define an associated approximation-algorithm. Our
algorithm is in terms of a new system, defined as a sequence of operator
products (\ref{eq:pc4}). And we then present necessary and sufficient
conditions for the algorithm to converge. When it does, we say that
the system is effective, see (\ref{eq:pc3}) and (\ref{eq:pc5}).
This general setting and framework are inspired by (but different
from) such well known algorithms as Gram-Schmidt, and Kaczmarz, see
also \cite{MR3996038,MR3796641}.

Our first applications of effective sequences, Corollary \ref{cor:Qn},
Proposition \ref{prop:Qn}, are to stochastic processes. In Theorem
\ref{thm:kol} (Kolmogorov) we recall the correspondence between positive
definite kernels on the one hand, and Gaussian processes, Gaussian
fields on the other. Our discussion of transforms will then resume
in sections \ref{subsec:Stochastic-analysis-and} and \ref{subsec:The-isomorphism}
below.
\begin{thm}
\label{thm:pc3}Let $\left\{ P_{j}\right\} _{j\in\mathbb{N}_{0}}$
be a sequence of orthogonal projections in a Hilbert space $\mathscr{H}$.
Set 
\begin{align}
T_{n} & =\left(1-P_{n}\right)\left(1-P_{n-1}\right)\cdots\left(1-P_{0}\right),\label{eq:pc3}\\
Q_{n} & =P_{n}\left(1-P_{n-1}\right)\cdots\left(1-P_{0}\right),\label{eq:pc4}
\end{align}
where $Q_{0}=P_{0}$. 

For all $n\in\mathbb{N}$, we have 
\[
\left\Vert x\right\Vert ^{2}=\left\Vert T_{n}x\right\Vert ^{2}+\sum_{k=0}^{n}\left\Vert Q_{k}x\right\Vert ^{2},\quad x\in\mathscr{H}.
\]
 The combination of \ref{eq:pc3} and \ref{eq:pc5} yields a condition
for when the algorithm yields a direct sum representation for the
Hilbert space $\mathscr{H}$. Equation \ref{eq:pc5} expresses this
in the form of a resolution of the identity operator for $\mathscr{H}$
in terms of the generalized Kaczmarz operators $Q_{n}$ introduced
in \ref{eq:pc4}. Equations \ref{eq:pc6} and \ref{eq:pc7} yield
equivalent forms of this sum expansion. Hence $T_{n}\xrightarrow{\;s\:}0$
if and only if 
\begin{equation}
I=\sum_{j\in\mathbb{N}_{0}}Q_{j}^{*}Q_{j}.\label{eq:pc5}
\end{equation}
More precisely, (\ref{eq:pc5}) means that, 
\begin{equation}
\left\langle x,y\right\rangle =\sum_{j\in\mathbb{N}_{0}}\left\langle Q_{j}x,Q_{j}y\right\rangle ,\quad x,y\in\mathscr{H}.\label{eq:pc6}
\end{equation}
In particular, 
\begin{equation}
\left\Vert x\right\Vert ^{2}=\sum_{j\in\mathbb{N}_{0}}\left\Vert Q_{j}x\right\Vert ^{2},\quad x\in\mathscr{H}.\label{eq:pc7}
\end{equation}
\end{thm}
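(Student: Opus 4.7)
The plan is to exploit the telescoping identity $Q_n = T_{n-1} - T_n$, where by convention I would set $T_{-1} = I$ (so that $Q_0 = T_{-1} - T_0 = I - (I-P_0) = P_0$, matching the statement), together with the fact that for each $n$, the vectors $Q_n x$ and $T_n x$ are orthogonal in $\mathscr{H}$. The orthogonality comes directly from the factorizations $Q_n = P_n T_{n-1}$ and $T_n = (I-P_n) T_{n-1}$: one has
\[
\langle Q_n x, T_n y\rangle = \langle P_n T_{n-1} x,\, (I-P_n) T_{n-1} y\rangle = \langle T_{n-1} x,\, P_n(I-P_n) T_{n-1} y\rangle = 0,
\]
since $P_n$ is a self-adjoint projection, so $P_n(I-P_n)=0$.

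Given these two observations, I would prove the norm identity by induction on $n$. The base case is $\|x\|^2 = \|Q_0 x\|^2 + \|T_0 x\|^2$, i.e.\ Pythagoras applied to $x = P_0 x + (I-P_0) x$. For the inductive step, applying Pythagoras to the orthogonal decomposition $T_{n-1} x = Q_n x + T_n x$ yields $\|T_{n-1} x\|^2 = \|Q_n x\|^2 + \|T_n x\|^2$, which, substituted into the identity at level $n-1$, gives the stated formula at level $n$.

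Once the identity is established, the equivalence $T_n \xrightarrow{s} 0 \Longleftrightarrow \sum_{j \in \mathbb{N}_0}\|Q_j x\|^2 = \|x\|^2$ for every $x \in \mathscr{H}$ is immediate by letting $n\to\infty$ in the formula: $\|T_n x\|^2 \to 0$ for all $x$ precisely when the partial sums $\sum_{k=0}^n \|Q_k x\|^2$ converge to $\|x\|^2$. This gives (\ref{eq:pc7}), and then standard polarization (using the four-term identity on $\|x \pm y\|^2$, $\|x \pm iy\|^2$) upgrades the quadratic identity to the sesquilinear one (\ref{eq:pc6}); the latter is exactly the interpretation of (\ref{eq:pc5}) as $I = \sum_j Q_j^* Q_j$ in the strong (equivalently weak) operator sense, since $\|Q_j x\|^2 = \langle Q_j^* Q_j x, x\rangle$.

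The main obstacle is essentially bookkeeping rather than substance: one must confirm that the telescoping bottoms out correctly (which motivates the convention $T_{-1}=I$), and verify the orthogonality $Q_n \perp T_n$ cleanly via the algebraic identity $P_n(I-P_n)=0$. After that, the result reduces to Pythagoras iterated, and the equivalence with (\ref{eq:pc5}) and (\ref{eq:pc6}) is a formal consequence of polarization.
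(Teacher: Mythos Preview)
Your argument is correct and is essentially the paper's own proof: both reduce to the recursion $\|T_{n-1}x\|^{2}=\|Q_{n}x\|^{2}+\|T_{n}x\|^{2}$, obtained from $P_{n}(I-P_{n})=0$ (equivalently $\|(I-P_{n})y\|^{2}=\|y\|^{2}-\|P_{n}y\|^{2}$ with $y=T_{n-1}x$), and then telescope down to the base case. Your added remark on polarization to pass from (\ref{eq:pc7}) to (\ref{eq:pc6}) is a welcome detail that the paper leaves implicit.
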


\begin{proof}
Note that 
\begin{eqnarray*}
\left\Vert T_{n}x\right\Vert ^{2} & = & \left\Vert \left(1-P_{n}\right)\left(1-P_{n-1}\right)\cdots\left(1-P_{0}\right)x\right\Vert ^{2}\\
 & = & \left\Vert \left(1-P_{n-1}\right)\cdots\left(1-P_{0}\right)x\right\Vert ^{2}-\left\Vert P_{n}\left(1-P_{n-1}\right)\cdots\left(1-P_{0}\right)x\right\Vert ^{2}\\
 & = & \left\Vert T_{n-1}x\right\Vert ^{2}-\left\Vert Q_{n}x\right\Vert ^{2}\\
 & = & \left\Vert T_{n-2}x\right\Vert ^{2}-\left\Vert Q_{n-1}x\right\Vert ^{2}-\left\Vert Q_{n}x\right\Vert ^{2}\\
 & \vdots\\
 & = & \left\Vert \left(1-P_{0}\right)x\right\Vert ^{2}-\left\Vert Q_{1}x\right\Vert ^{2}-\cdots-\left\Vert Q_{n-1}x\right\Vert ^{2}-\left\Vert Q_{n}x\right\Vert ^{2}\\
 & = & \left\Vert x\right\Vert ^{2}-\left\Vert Q_{0}x\right\Vert ^{2}-\left\Vert Q_{1}x\right\Vert ^{2}-\cdots-\left\Vert Q_{n-1}x\right\Vert ^{2}-\left\Vert Q_{n}x\right\Vert ^{2}.
\end{eqnarray*}
Therefore 
\[
T_{n}\xrightarrow{\;s\:}0\Longleftrightarrow\left\Vert x\right\Vert ^{2}=\sum_{j\in\mathbb{N}_{0}}\left\Vert Q_{j}x\right\Vert ^{2}.
\]
\end{proof}
\begin{rem}
The system of operators $\left\{ Q_{j}\right\} _{j\in\mathbb{N}_{0}}$
in \thmref{pc3} has frame-like properties, see (\ref{eq:pc5})--(\ref{eq:pc7}).
Specifically, the mapping 
\[
\mathscr{H}\ni x\xmapsto{\;V\;}\left(Q_{j}x\right)\in l^{2}\left(\mathbb{N}_{0}\right)\otimes\mathscr{H}
\]
plays the role of an analysis operator, and the synthesis operator
$V^{*}$ is given by 
\[
l^{2}\left(\mathbb{N}_{0}\right)\otimes\mathscr{H}\ni\xi\xmapsto{\;V^{*}\;}\sum_{j\in\mathbb{N}_{0}}Q_{j}^{*}\xi_{j}.
\]
Note that $1=V^{*}V$, and (\ref{eq:pc7}) is the \emph{generalized
Parseval identity}. 
\end{rem}

\begin{lem}
\label{lem:Let--be}Let $\left\{ Z_{n}\right\} _{n\in\mathbb{N}_{0}}$
be a system of independent identically distributed $N\left(0,1\right)$s
(i.i.d $N\left(0,1\right)$) on $\mathbb{R}.$ Let $\mathscr{H}$
be a Hilbert space, and $\left\{ Q_{n}\right\} _{n\in\mathbb{N}_{0}}$
a system of projections as in \thmref{pc3}, i.e., 
\begin{equation}
\sum_{n\in\mathbb{N}_{0}}\left\langle Q_{n}u,Q_{n}v\right\rangle _{\mathscr{H}}=\left\langle u,v\right\rangle _{\mathscr{H}},\quad\forall u,v\in\mathscr{H}.\label{eq:G7}
\end{equation}
Then 
\begin{equation}
W\left(\cdot\right)=W^{\left(Q,\mathscr{H}\right)}\left(\cdot\right)\coloneqq\sum_{n\in\mathbb{N}_{0}}Q_{n}Z_{n}\left(\cdot\right)\label{eq:G8}
\end{equation}
 defines an operator valued Gaussian process, and 
\begin{equation}
\mathbb{E}\left(\left\langle W\left(\cdot\right)u,W\left(\cdot\right)v\right\rangle _{\mathscr{H}}\right)=\left\langle u,v\right\rangle ,\quad\forall u,v\in\mathscr{H}.\label{eq:G9}
\end{equation}
\end{lem}

\begin{proof}[Proof sketch]
Fix $u,v\in\mathscr{H}$; then 
\begin{eqnarray*}
\text{LHS}_{\left(\ref{eq:G9}\right)} & \underset{\text{by \ensuremath{\left(\ref{eq:G8}\right)}}}{=} & \underset{\mathbb{N}_{0}\times\mathbb{N}_{0}}{\sum\sum}\left\langle Q_{n}u,Q_{m}v\right\rangle _{\mathscr{H}}\underset{=\delta_{n,m}}{\underbrace{\mathbb{E}\left(Z_{n}Z_{m}\right)}}\\
 & = & \sum_{n\in\mathbb{N}_{0}}\left\langle Q_{n}u,Q_{n}v\right\rangle _{\mathscr{H}}\\
 & \underset{\text{by \ensuremath{\left(\ref{eq:G7}\right)}}}{=} & \left\langle u,v\right\rangle _{\mathscr{H}}.
\end{eqnarray*}
\end{proof}
\begin{cor}
\label{cor:Qn}Let $\left\{ Q_{n}\right\} _{n\in\mathbb{N}_{0}}$
be an effective system in $\mathscr{H}_{K}$, where $K:X\times X\rightarrow\mathbb{R}$
is a given p.d. kernel and $\mathscr{H}_{K}$ the associated RKHS.
Then $W$ from (\ref{eq:G8}) has the property that 
\[
K\left(x,y\right)=\mathbb{E}\left(\left\langle W\left(\cdot\right)K_{x},W\left(\cdot\right)K_{y}\right\rangle _{\mathscr{H}_{K}}\right).
\]
\end{cor}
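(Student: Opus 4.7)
The plan is to derive this statement as a direct specialization of the preceding lemma. The setup of the corollary matches the hypotheses of that lemma exactly: the Hilbert space $\mathscr{H}$ is taken to be the RKHS $\mathscr{H}_{K}$, and by the assumption that $\{Q_{n}\}_{n\in\mathbb{N}_{0}}$ is an effective system in $\mathscr{H}_{K}$ the Parseval-type identity (\ref{eq:G7}) holds (this is exactly the content of (\ref{eq:pc6}) in \thmref{pc3}). Consequently the operator-valued Gaussian process $W=W^{(Q,\mathscr{H}_{K})}$ defined in (\ref{eq:G8}) is well-defined and satisfies (\ref{eq:G9}).

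The core step will then be to evaluate (\ref{eq:G9}) on the distinguished vectors $u=K_{x}$ and $v=K_{y}$, where $K_{x}(\cdot)=K(\cdot,x)$ denotes the canonical reproducing kernel vector at $x\in X$. Both $K_{x}$ and $K_{y}$ lie in $\mathscr{H}_{K}$, and by the defining reproducing property of $\mathscr{H}_{K}$ one has
\[
\langle K_{x},K_{y}\rangle_{\mathscr{H}_{K}}=K(x,y).
\]
Substituting into (\ref{eq:G9}) yields
\[
\mathbb{E}\!\left(\langle W(\cdot)K_{x},W(\cdot)K_{y}\rangle_{\mathscr{H}_{K}}\right)=\langle K_{x},K_{y}\rangle_{\mathscr{H}_{K}}=K(x,y),
\]
which is the desired identity.

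Honestly, there is no substantial obstacle here: the corollary is essentially a one-line consequence of the lemma once one unpacks what ``effective system'' means and invokes the reproducing property. The only minor point of care is to note that the series (\ref{eq:G8}) converges in the appropriate sense on the vectors $K_{x}$, but this is handled uniformly by the $L^{2}(\Omega,\mathbb{P})$-orthogonality of the i.i.d.\ $N(0,1)$ family $\{Z_{n}\}$ together with $\sum_{n}\|Q_{n}K_{x}\|_{\mathscr{H}_{K}}^{2}=\|K_{x}\|_{\mathscr{H}_{K}}^{2}=K(x,x)<\infty$, which follows from (\ref{eq:pc7}) applied to $x=K_{x}$. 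Thus the entire argument reduces to citing the lemma, verifying that $K_{x},K_{y}\in\mathscr{H}_{K}$, and invoking the reproducing property.
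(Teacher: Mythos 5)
Your proof is correct and is exactly the intended argument: the paper states this as a corollary with no written proof precisely because it is the immediate specialization of the preceding lemma's identity (\ref{eq:G9}) to $u=K_{x}$, $v=K_{y}$ combined with the reproducing property $\left\langle K_{x},K_{y}\right\rangle _{\mathscr{H}_{K}}=K\left(x,y\right)$. Your added remark on the $L^{2}\left(\mathbb{P}\right)$-convergence of the series defining $W$ on the vectors $K_{x}$ is a sensible extra check but not something the paper spells out.
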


We recall the following theorem of Kolmogorov. It states that there
is a 1-1 correspondence between p.d. kernels on a set and mean zero
Gaussian processes indexed by the set. One direction is easy, and
the other is the deep part:
\begin{thm}[Kolmogorov]
\label{thm:kol} Let $X$ be a set. A function $K:X\times X\rightarrow\mathbb{C}$
is positive definite if and only if there is a Gaussian process $\left\{ W_{x}\right\} _{x\in X}$
realized in $L^{2}\left(\Omega,\mathscr{F},\mathbb{P}\right)$ with
mean zero, such that 
\begin{equation}
K\left(x,y\right)=\mathbb{E}\left[\overline{W}_{x}W_{y}\right].\label{eq:K1}
\end{equation}
Note: This deals with the general framework of Gaussian processes:
There are two sets where $X$ is the indexed set for the Gaussian
processes, for the classical case, $X$ may be a real line. And $\Omega$
is the probability space, also referred to as the sample space.
\end{thm}

Starting with a set $X$, and a positive definite (p.d) kernel on
$X\times X$, there are then two approaches to fleshing out the desired
Gaussian process $W=W^{(K)}$ indexed by $X$, centered, and having
$K$ as its covariance kernel. Both are useful, and they serve different
purposes.

The first approach is based on an application of the Kolmogorov extension
principle; and its validity can be established with the use of the
assumed p.d. property for $K$ ; we refer to \cite{MR4379621,MR2966130,MR1474726,MR3843552}.

The other approach is different, stating with a fixed p.d. kernel
$K$ as above, one then proceeds as follows: (i) pass to the corresponding
reproducing kernel Hilbert space (RKHS) $H\left(K\right)$ ; (ii)
pick an orthonormal basis (ONB) in $H\left(K\right)$, and (iii) build
the desired Gaussian process $W=W^{(K)}$ from it with an application
of a choice of an i.i.d. system of $N(0.1)$ random variables; and
(iv) an application of the Central Limit Theorem; see e.g., \cite{MR2239907,MR562914,MR4020693,MR2558684}.
In both cases, we make use of the general fact that a centered Gaussian
process is determined by its covariance kernel, see e.g., \cite{MR3888850}.
For the details of this approach, we also refer to Lemma \ref{lem:Let--be}
above, especially equations (\ref{eq:G8}) and (\ref{eq:G9}).
\begin{proof}
We refer to \cite{PaSc75} for the fact that very p.d. function $K$
on $X\times X$ can be realized as the covariance kernel of a Gaussian
process in some probability space. Conversely, to stress the idea,
we include a proof that K, as defined in (\ref{eq:K1}), is positive
definite: Let $\left\{ c_{i}\right\} _{i=1}^{n}\subset\mathbb{C}$
and $\left\{ x_{i}\right\} _{i=1}^{n}\subset X$, then we have
\[
\sum\nolimits _{i}\sum\nolimits _{j}\overline{c_{i}}c_{j}K\left(x_{i},x_{j}\right)=\mathbb{E}\left[\big|\sum c_{i}W_{x_{i}}\big|^{2}\right]\geq0,
\]
i.e., $K$ is p.d. 
\end{proof}
Let $\left(X,\mathscr{B},\nu\right)$ be a $\sigma$-finite measure
space, and let $\mathscr{B}_{fin}=\left\{ E\in\mathscr{B}:\nu\left(E\right)<\infty\right\} $.
Below we consider the following kernel $K$ on $\mathscr{B}_{fin}\times\mathscr{B}_{fin}$:
Set 
\begin{equation}
K\left(A,B\right)=\nu\left(A\cap B\right),\quad A,B\in\mathscr{B}_{fin}\label{eq:n4}
\end{equation}
and let $\mathscr{H}_{K^{\left(\nu\right)}}$ denote the associated
RKHS. 

This particular classes of Gaussian processes and RKHSs play an important
role in our approach to manifold learning, and to approximation via
Monte Carlo simulations and associated Karhunen--Loève expansions.
We also note that the resulting stochastic analysis is of interest
for the same purpose. It is further outlined in the remaining two
subsections below.
\begin{prop}
~
\begin{enumerate}
\item \label{enu:ad1}$K=K^{\left(\nu\right)}$ in (\ref{eq:n4}) is positive
definite.
\item \label{enu:ad2} $K^{\left(\nu\right)}$ is the covariance kernel
for the stationary Wiener process $W=W^{\left(\nu\right)}$ indexed
by $\mathscr{B}_{fin}$, i.e., Gaussian, mean zero, and 
\begin{equation}
\mathbb{E}\left(W_{A}W_{B}\right)=K^{\left(\nu\right)}\left(A,B\right)=\nu\left(A\cap B\right).\label{eq:n5}
\end{equation}
\item \label{enu:ad3} If $f\in L^{2}\left(\nu\right)$, and $W_{f}=\int_{X}f\left(x\right)dW_{x}$
denotes the corresponding Wiener integral, then 
\[
\mathbb{E}\left(\left|W_{f}\right|^{2}\right)=\int_{X}\left|f\right|^{2}d\nu;
\]
in particular, if $f=\sum_{i}\alpha_{i}\chi_{A_{i}}$, then 
\[
\sum\nolimits _{i}\sum\nolimits _{j}\alpha_{i}\alpha_{j}K^{\left(\nu\right)}\left(A_{i},A_{j}\right)=\int_{X}\left|\sum\nolimits _{i}\alpha_{i}\chi_{A_{i}}\right|^{2}d\nu.
\]
\item \label{enu:ad4}The RKHS $\mathscr{H}_{K^{\left(\nu\right)}}$ of
the positive definite kernel in (\ref{eq:n4}) consists of functions
$F$ on $\mathscr{B}_{fin}$ represented by $f\in L^{2}\left(\nu\right)$
via 
\begin{equation}
F\left(A\right)=F_{f}\left(A\right)=\int_{A}fd\nu,\quad A\in\mathscr{B}_{fin};\label{eq:n5b}
\end{equation}
and 
\begin{equation}
\left\Vert F_{f}\right\Vert _{\mathscr{H}_{K^{\left(\nu\right)}}}^{2}=\left\Vert f\right\Vert _{L^{2}\left(\nu\right)}^{2}=\int_{X}\left|f\right|^{2}d\nu.\label{eq:n6}
\end{equation}
\item \label{enu:ad5}TThe map specified by 
\begin{equation}
\Psi\left(K^{\left(\nu\right)}\left(\cdot,A\right)\right)=\Psi\left(\nu\left(\left(\cdot\right)\cap A\right)\right)=\chi_{A},\quad\forall A\in\mathscr{B}_{fin}\label{eq:7}
\end{equation}
extends by linearity and by limits to an isometry 
\begin{equation}
\Psi:\mathscr{H}\left(K^{\left(\nu\right)}\right)\longrightarrow L^{2}\left(\nu\right).\label{eq:q8}
\end{equation}
More generally if $F_{f}\in\mathscr{H}\left(K^{\left(\nu\right)}\right)$
is as in (\ref{eq:n5b}), then $\Psi\left(F_{f}\right)=f\in L^{2}\left(\nu\right)$. 
\end{enumerate}
\end{prop}

\begin{proof}
(\ref{enu:ad1}) One checks that
\[
\sum\nolimits _{i}\sum\nolimits _{j}\alpha_{i}\alpha_{j}K^{\left(\nu\right)}\left(A_{i},A_{j}\right)=\int\left|\sum\nolimits _{i}\alpha_{i}1_{A_{i}}\right|^{2}d\nu\geq0,
\]
which holds for all $\left\{ \alpha_{i}\right\} _{1}^{n}$ and $\left\{ A_{i}\right\} _{1}^{n}$
with $\alpha_{i}\in\mathbb{R}$, $A_{i}\in\mathscr{B}_{fin}$, and
for all $n\in\mathbb{N}$. Here we assume that $\alpha_{i}\in\mathbb{R}$.
Note that a real symmetric matrix $T$ is p.d. if and only if $\langle Tx,x\rangle\geq0$,
for all real vectors $x$. (cf N.N. Vakhania, V.I. Tarieladze, S.A.
Chobayan, Probability distributions on Banach spaces (1987) \cite{MR1435288},
Kluwer; the proof of Theorem 1.1, page 128 in Russian original edition).

(\ref{enu:ad2}) Follows from \thmref{kol}. 

(\ref{enu:ad3}) One first verifies that 
\[
\sum\nolimits _{i}\sum\nolimits _{j}\alpha_{i}\alpha_{j}K^{\left(\nu\right)}\left(A_{i},A_{j}\right)=\int_{X}\left|\sum\nolimits _{i}\alpha_{i}\chi_{A_{i}}\right|^{2}d\nu.
\]
Then use density and standard approximation by simple functions to
get the desired conclusion.

(\ref{enu:ad4}) Note that every $F\in\mathscr{H}(K^{\left(\nu\right)})$
is a $\sigma$-additive signed measure, and $dF\ll d\nu$. Moreover,
for $A,B\in\mathscr{B}_{fin}$, one has
\[
K^{\left(\nu\right)}\left(A,B\right)=\int_{B}1_{A}\left(x\right)d\nu\left(x\right)=\mu\left(B\cap A\right),
\]
so that 
\begin{equation}
\frac{dK^{\left(\nu\right)}\left(\cdot,A\right)}{d\nu}=1_{A}.\label{eq:n1-5}
\end{equation}
Now, for all $B\in\mathscr{B}_{fin}$, we have 
\begin{equation}
F\left(B\right)=\int_{B}\left(\frac{dF}{d\nu}\right)d\nu.\label{eq:N11}
\end{equation}
The function $F$ on $\mathscr{B}_{fin}$ is in $\mathscr{H}(K^{\left(\nu\right)})$
if and only if the following supremum, on all positions $\left\{ A_{i}\right\} _{1}^{n},$$A_{i}\in\mathscr{B}_{fin}$,
$A_{i}\cap A_{j}=\emptyset$, for $i\neq j$, is finite with the upper
bound depending on $F$ 

\[
\sup_{{A_{i}}}\sum_{i}\frac{|F(A_{i})|^{2}}{\mu(A_{i})}<\infty.
\]
Indeed, from the reproducing property, we get that 
\begin{eqnarray*}
F\left(B\right) & = & \left\langle K\left(B,\cdot\right),F\right\rangle _{\mathscr{H}(K^{\left(\nu\right)})}\\
 & = & \int\frac{dK\left(B,\cdot\right)}{d\nu}\left(\frac{dF}{d\nu}\right)d\nu\\
 & = & \int\chi_{B}\left(\frac{dF}{d\nu}\right)d\nu=\int_{B}\left(\frac{dF}{d\nu}\right)d\nu.
\end{eqnarray*}
Therefore, if $F_{i}\in\mathscr{H}(K^{\left(\nu\right)})$, $i=1,2$,
then 
\[
\left\langle F_{1},F_{2}\right\rangle _{\mathscr{H}(K^{\left(\nu\right)})}=\int\left(\frac{dF_{1}}{d\nu}\right)\left(\frac{dF_{2}}{d\nu}\right)d\nu.
\]

A more detailed discussion of this part of stochastic calculus can
be found at various places in the literature; see e.g., \cite{MR2966130,MR3800275,MR3888850,MR4020693}. 

(\ref{enu:ad5}) This follows from the fact that (i) $\Psi$ is isometric
by its very definition; (ii) the linear spans, $\text{span}\left\{ K^{(\nu)}(\cdot,A):A\in\mathscr{B}_{fin}\right\} $,
and $\text{span}\left\{ \chi_{A}:A\in\mathscr{B}_{fin}\right\} $
are dense in the respective Hilbert spaces, and (iii) an application
of standard approximation by simple functions in $L^{2}$-spaces.
\end{proof}
\begin{prop}
\label{prop:Qn}Let $\mathscr{B}_{fin}\times\mathscr{B}_{fin}\xrightarrow{\;K\;}\mathbb{R}$
be the p.d. kernel as in (\ref{eq:n4}), and $\mathscr{H}_{K^{\left(\nu\right)}}$
the RKHS of $K$. Suppose $\left\{ Q_{n}\right\} _{\mathbb{N}_{0}}$
is an effective system in $\mathscr{H}_{K^{\left(\nu\right)}}$, and
let $\Psi$ be the isometry specified in (\ref{eq:7})--(\ref{eq:q8}).
Then 
\[
\left\{ \Psi Q_{n}\right\} _{n\in\mathbb{N}_{0}}
\]
is effective in the closed subspace 
\[
\Psi\left(\mathscr{H}_{K^{\left(\nu\right)}}\right)\subset L^{2}\left(\Omega,\mathbb{P}\right).
\]
\end{prop}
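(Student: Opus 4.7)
The approach is to exploit the fact that effectiveness, being formulated by the operator identity $\sum_{n} Q_n^{*}Q_n = I$ (equivalently $T_n \xrightarrow{s} 0$) from \thmref{pc3}, is preserved under unitary conjugation. By part~(\ref{enu:ad4}) of the preceding proposition, $\Psi$ is an isometric isomorphism of $\mathscr{H}_{K^{(\nu)}}$ onto $L^{2}(\nu)$; composing with the Ito/Wiener isometry $J : L^{2}(\nu) \hookrightarrow L^{2}(\Omega,\mathbb{P})$, $f \mapsto W_f$, justified by part~(\ref{enu:ad3}), yields a Hilbert space isomorphism $U := J\Psi$ from $\mathscr{H}_{K^{(\nu)}}$ onto the Gaussian subspace $\Psi(\mathscr{H}_{K^{(\nu)}}) \subset L^{2}(\Omega,\mathbb{P})$. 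The proposition then amounts to checking effectiveness of the conjugated system $\widetilde{Q}_n := U Q_n U^{-1}$ on this subspace.

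First I would verify that conjugation by $U$ preserves the data making $Q_n$ eligible inputs for \thmref{pc3}: if $Q_n$ arises via (\ref{eq:pc4}) from orthogonal projections $\{P_j\}$ on $\mathscr{H}_{K^{(\nu)}}$, then $\widetilde{P}_j := U P_j U^{-1}$ are orthogonal projections on $U(\mathscr{H}_{K^{(\nu)}})$ (since $U$ is unitary), and $\widetilde{Q}_n = \widetilde{P}_n(1 - \widetilde{P}_{n-1}) \cdots (1 - \widetilde{P}_0)$. Second, from $U^{*} = U^{-1}$ I obtain $\widetilde{Q}_n^{*} \widetilde{Q}_n = U\,Q_n^{*} Q_n\,U^{-1}$, so that summing strongly over $n \in \mathbb{N}_0$ gives
\[
\sum_{n \in \mathbb{N}_0} \widetilde{Q}_n^{*} \widetilde{Q}_n \;=\; U \Bigl( \sum_{n \in \mathbb{N}_0} Q_n^{*} Q_n \Bigr) U^{-1} \;=\; U \cdot I_{\mathscr{H}_{K^{(\nu)}}} \cdot U^{-1} \;=\; I_{U(\mathscr{H}_{K^{(\nu)}})},
\]
which is precisely the effectiveness identity (\ref{eq:pc5}) for the conjugated system on the subspace $\Psi(\mathscr{H}_{K^{(\nu)}})$. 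Equivalently, $\widetilde{T}_n = U T_n U^{-1} \xrightarrow{s} 0$ on that subspace, since $\|U^{\pm 1}\| = 1$ and $T_n \xrightarrow{s} 0$ on $\mathscr{H}_{K^{(\nu)}}$ by hypothesis.

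There is no real obstacle here --- the proof is a transport of structure along an isometry --- but the one point I would state carefully is that effectiveness is asserted on the subspace $\Psi(\mathscr{H}_{K^{(\nu)}})$ and not on the ambient space $L^{2}(\Omega,\mathbb{P})$. Indeed, extending $\widetilde{Q}_n$ by zero on the orthogonal complement, the sum $\sum_n \widetilde{Q}_n^{*}\widetilde{Q}_n$ equals the projection onto $\Psi(\mathscr{H}_{K^{(\nu)}})$ rather than the full identity, and the frame identity (\ref{eq:pc7}) reads $\|F\|_{L^{2}(\Omega,\mathbb{P})}^{2} = \sum_n \|\widetilde{Q}_n F\|^{2}$ for $F$ in the Gaussian subspace, which is the content of the proposition.
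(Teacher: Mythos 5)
Your proposal is correct and takes essentially the same route as the paper: the paper's proof is the one-line transport of the effectiveness identity along the isometry, computing $\mathbb{E}[\Psi(F)\Psi(G)]=\langle F,G\rangle_{\mathscr{H}_{K^{(\nu)}}}=\sum_{n}\langle Q_{n}F,Q_{n}G\rangle_{\mathscr{H}_{K^{(\nu)}}}=\sum_{n}\mathbb{E}[\Psi(Q_{n}F)\Psi(Q_{n}G)]$, which is exactly your conjugation argument expressed at the level of the bilinear identity (\ref{eq:pc6}) rather than the operator identity (\ref{eq:pc5}). The only substantive difference is that you make explicit two points the paper leaves implicit --- the composition of $\Psi$ with the Wiener/Ito isometry $f\mapsto W_{f}$ needed to land in $L^{2}(\Omega,\mathbb{P})$, and the fact that effectiveness holds on the subspace $\Psi(\mathscr{H}_{K^{(\nu)}})$ rather than on all of $L^{2}(\Omega,\mathbb{P})$ --- both of which are worthwhile clarifications.
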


\begin{proof}
For all $F,G\in\mathscr{H}_{K^{\left(\nu\right)}}$, we have 
\begin{align*}
\mathbb{E}\left[\Psi\left(F\right)\Psi\left(G\right)\right] & =\left\langle F,G\right\rangle _{\mathscr{H}_{K^{\left(\nu\right)}}}\\
 & =\sum_{n\in\mathbb{N}_{0}}\left\langle Q_{n}F,Q_{n}G\right\rangle _{\mathscr{H}_{K^{\left(\nu\right)}}}\\
 & =\sum_{n\in\mathbb{N}_{0}}\mathbb{E}\left[\Psi\left(Q_{n}F\right)\Psi\left(Q_{n}G\right)\right].
\end{align*}
\end{proof}

\subsection{\label{subsec:Stochastic-analysis-and}Stochastic analysis and p.d.
kernels}

Consider a suitable transform for the more general class of positive
definite function $K:S\times S\rightarrow\mathbb{R}$ (or $\mathbb{C}$).
We can introduce a $\sigma$-algebra on $S$ generated by $\left\{ K\left(\cdot,t\right)\right\} _{t\in S}$
such that $\mu\mapsto\int\mu\left(ds\right)K\left(s,\cdot\right)$
makes sense, where $\mu$ is a suitable signed measure (i.e., $\mu K\mu<\infty$;
see (\ref{def:m2})); but it is more useful to consider linear functionals
$l$ on $\mathscr{H}\left(K\right)$.
\begin{defn}
\label{def:L2}We say that $l\in\mathscr{L}_{2}\left(K\right)$ if,
there exists $C=C_{l}<\infty$ such that $\left|l\left(G\right)\right|\leq C_{l}\left\Vert G\right\Vert _{\mathscr{H}\left(K\right)}$,
for all $G\in\mathscr{H}\left(K\right)$. On $\mathscr{L}_{2}\left(K\right)$,
introduce the Hilbert inner product
\[
lKl:=\left\langle l,l\right\rangle _{\mathscr{L}_{2}}=l\left(\text{acting in \ensuremath{s}}\right)K\left(s,t\right)l\left(\text{acting in \ensuremath{t}}\right).
\]
\end{defn}

\begin{defn}
\label{def:m2}Given $K:S\times S\rightarrow\mathbb{R}$ p.d., let
$\mathscr{B}_{\left(S,K\right)}$ be the cylinder $\sigma$-algebra
on $S$ and consider signed measures $\mu$ on $\mathscr{B}_{\left(S,K\right)}$.
More precisely, $\mathscr{B}_{\left(S,K\right)}$ is the weakest $\sigma$-algebra
with respect to which $K$ is measurable. Let 
\[
\mathfrak{M}_{2}\left(K\right):=\left\{ \mu\mid\mu K\mu:=\left\langle \mu,\mu\right\rangle _{\mathfrak{M}_{2}}=\iint\mu\left(ds\right)K\left(s,t\right)\mu\left(dt\right)<\infty\right\} 
\]
where $\left\langle \mu,\mu\right\rangle _{\mathfrak{M}_{2}}$ is
a Hilbert pre-inner. 

The general idea in Definition \ref{def:m2} here is to make use of
a fixed p.d. kernel $K$ on $S\times S$, and then from this introduce
a new pre-Hilbert inner product on signed measures as follows. The
construction will be in three steps: (i) we use $K$ to create a metric
on $S$, and we then consider the corresponding Borel $\sigma$-algebra;
(ii) and we may therefore introduce the real vector space of all the
corresponding signed Borel measures $\mu$, $\nu$ etc. (iii) In our
definition of the $K$-pre-Hilbert inner product we make it precise
as a pre-Hilbert inner product via the formula $\mu K\nu$ where $K$
now takes the form of an integral kernel. 
\end{defn}

The basic idea with the approach using linear functionals, $l:\mathscr{H}\left(K\right)\rightarrow\mathbb{R}$,
is to extend from the case of finite atomic measures to a precise
completion. 
\begin{example}
For $\left\{ c_{i}\right\} _{1}^{n}$, $\left\{ s_{i}\right\} _{1}^{n}$
$c_{i}\in\mathbb{R}$, let 
\begin{equation}
l\left(G\right):=\sum_{i}c_{i}G\left(s_{i}\right),\;\forall G\in\mathscr{H}\left(K\right);
\end{equation}
which is the case when $\mu=\sum_{i}c_{i}\delta_{s_{i}}$, with $\delta_{s_{i}}$
denoting the Dirac measure. Setting 
\[
T_{K}\left(\mu\right)=\sum_{i}c_{i}K\left(s_{i},\cdot\right)\in\mathscr{H}\left(K\right),
\]
we then get 
\begin{equation}
l\left(G\right)=\left\langle T_{K}\left(\mu\right),G\right\rangle _{\mathscr{H}\left(K\right)},\;\forall G\in\mathscr{H}\left(K\right).
\end{equation}
Moreover, the following isometric property holds for all finite atomic
measures $\mu$: 
\[
\left\Vert T_{K}\left(\mu\right)\right\Vert _{\mathscr{H}\left(K\right)}=\left\Vert \mu\right\Vert _{\mathfrak{M}_{2}\left(K\right)}.
\]
\end{example}

We include some basic facts about $\mathfrak{M}_{2}\left(K\right)$
and $\mathscr{L}_{2}\left(K\right)$. 
\begin{thm}
\label{thm:kernel}Suppose the p.d. function $K:X\times X\rightarrow\mathbb{R}$
has the representation
\begin{equation}
K\left(s,t\right)=\sum_{n\in\mathbb{N}}e_{n}\left(s\right)e_{n}\left(t\right)\label{eq:t7}
\end{equation}
(pointwise convergence) for some functions $\left(e_{n}\right)_{n\in\mathbb{N}}$.
\begin{enumerate}
\item Then $\left\{ e_{n}\right\} \subset\mathscr{H}\left(K\right)$, and
for all $G\in\mathscr{H}\left(K\right)$, 
\[
\left\Vert G\right\Vert _{\mathscr{H}\left(K\right)}^{2}=\sum\left|\left\langle G,e_{n}\right\rangle _{\mathscr{H}\left(K\right)}\right|^{2}.
\]
That is, $\left(e_{n}\right)$ is a Parseval frame in $\mathscr{H}\left(K\right)$. 
\item We have 
\[
l\in\mathscr{L}_{2}\left(K\right)\Longleftrightarrow\sum_{n\in\mathbb{N}}\left|l\left(e_{n}\right)\right|^{2}<\infty,
\]
i.e., $l\in\mathscr{L}_{2}\left(K\right)\Longleftrightarrow\left(l\left(e_{n}\right)\right)_{n\in\mathbb{N}}\in l^{2}\left(\mathbb{N}\right)$.
\end{enumerate}
\end{thm}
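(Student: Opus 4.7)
My strategy is to establish part (1) first, since the Parseval frame identity it furnishes is the engine that drives parts (2) and (3); these latter two are essentially the same statement and reduce, via the Riesz representation theorem applied to $\mathscr{H}(K)$, to a characterization of bounded functionals by their values on the frame vectors.

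For (1), I would realize $\mathscr{H}(K)$ as the range of the operator $A \colon \ell^{2}(\mathbb{N}) \to \mathbb{R}^{S}$ defined by $A(c)(s) = \sum_{n} c_{n} e_{n}(s)$. The series converges absolutely for every $s$ since $(e_{n}(s))_{n} \in \ell^{2}(\mathbb{N})$: indeed, (\ref{eq:t7}) with $t = s$ gives $\sum_{n} e_{n}(s)^{2} = K(s,s) < \infty$. Equipping $\mathrm{Ran}(A)$ with the quotient norm $\|f\|_{\ast} := \inf\{\|c\|_{\ell^{2}} : A(c) = f\}$ turns it into a Hilbert space, and I would identify it with $\mathscr{H}(K)$ by verifying (i) that every kernel section is in the range via $K(\cdot, t) = A\bigl((e_{n}(t))_{n}\bigr)$ and (ii) the reproducing identity $\langle f, K(\cdot, t)\rangle_{\ast} = f(t)$. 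Both facts reduce to the claim that $(e_{n}(t))_{n} \in (\ker A)^{\perp}$, which follows directly from the definition of $\ker A$ and the expansion of $K(s,t)$. Once this identification is made, $A$ is a coisometry from $\ell^{2}$ onto $\mathscr{H}(K)$, so $e_{n} = A(\delta_{n}) \in \mathscr{H}(K)$, and the isometric adjoint $A^{\ast} \colon \mathscr{H}(K) \to \ell^{2}$ acts by $G \mapsto (\langle G, e_{n}\rangle_{\mathscr{H}(K)})_{n}$. Its isometric property is precisely the Parseval identity $\|G\|^{2} = \sum_{n} |\langle G, e_{n}\rangle|^{2}$.

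With (1) in hand, (2) and (3) are addressed in tandem. For the forward direction, an $l \in \mathscr{L}_{2}(K)$ corresponds by Riesz to a unique $g_{l} \in \mathscr{H}(K)$ with $l(G) = \langle G, g_{l}\rangle$; then $l(e_{n}) = \langle e_{n}, g_{l}\rangle$ and the Parseval identity yields $\sum_{n} |l(e_{n})|^{2} = \|g_{l}\|_{\mathscr{H}(K)}^{2} < \infty$. For the converse, suppose $(l(e_{n}))_{n} \in \ell^{2}$; the bounded synthesis operator (the adjoint of $A^{\ast}$) produces $g := \sum_{n} l(e_{n}) e_{n} \in \mathscr{H}(K)$, and $\tilde{l}(G) := \langle G, g\rangle$ is a bounded linear functional with $\|\tilde l\| \le \|g\|$. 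One verifies $\tilde{l}(e_{m}) = l(e_{m})$ via the Parseval-frame reconstruction $e_{m} = \sum_{n} \langle e_{m}, e_{n}\rangle e_{n}$, and then concludes $\tilde{l} = l$ throughout $\mathscr{H}(K)$ using density of $\operatorname{span}\{e_{n}\}$.

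The main obstacle is the identification $\mathscr{H}(K) = \mathrm{Ran}(A)$ in (1): showing that $(e_{n}(t))_{n} \in (\ker A)^{\perp}$ so that the quotient norm coincides with the RKHS norm is the technical heart of the proof and requires careful use of the Moore--Aronszajn characterization of $\mathscr{H}(K)$. A secondary subtlety arises in the converse direction of (2)--(3): the extension step $\tilde{l} = l$ is immediate when $l$ is interpreted as an a priori bounded (or at least frame-determined) linear functional, so that the statement reads as the natural bijection $\mathscr{L}_{2}(K) \leftrightarrow \ell^{2}(\mathbb{N})$, $l \mapsto (l(e_{n}))_{n}$; for a purely algebraic linear functional $l$ on $\mathscr{H}(K)$, one needs to restrict to, or assume, this class in order for the equivalence to be meaningful.
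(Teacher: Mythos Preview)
Your argument is correct and follows the same overall architecture as the paper's proof: establish the Parseval frame property first, then use Riesz for the forward direction of (2)--(3) and synthesize $F_{l}=\sum_{n} l(e_{n})e_{n}$ for the converse. The paper proceeds more directly for part~(1): it simply writes the expansion $G(t)=\sum_{n}\langle G,e_{n}\rangle_{\mathscr{H}(K)}\,e_{n}(t)$ and records the analysis operator $V(G)=(\langle G,e_{n}\rangle)_{n}$ as an isometry into $\ell^{2}$, with adjoint $V^{*}((a_{n}))=\sum_{n}a_{n}e_{n}$. Your route via the synthesis operator $A$ and the Moore--Aronszajn identification of $\mathrm{Ran}(A)$ with $\mathscr{H}(K)$ is a genuinely more careful foundation for the same fact; it makes explicit why the quotient norm on $\mathrm{Ran}(A)$ is the RKHS norm, which the paper essentially takes for granted. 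The cost is a heavier setup; the benefit is that the membership $e_{n}\in\mathscr{H}(K)$ and the Parseval identity are derived rather than asserted.

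For the converse in (2)--(3), both arguments pass $l$ through the series $G=\sum_{n}\langle G,e_{n}\rangle e_{n}$: the paper writes $l(G)=\sum_{n}\langle G,e_{n}\rangle\, l(e_{n})$ without comment, while you flag that this interchange needs $l$ to be frame-determined (or at least closable). Your caveat is well taken and in fact applies equally to the paper's proof; neither argument covers a purely algebraic functional, and the equivalence should be read as characterizing those $l$ for which the frame coefficients $(l(e_{n}))_{n}$ determine $l$.
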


\begin{proof}
~

Part (1). It suffices to check that, for all $N$, $\left(c_{i}\right)_{1}^{N}\subset\mathbb{R}$,
and all $\left(x_{i}\right)_{1}^{N}\subset X$, one has 
\[
\left|\sum_{i}c_{i}e_{n_{0}}\left(x_{i}\right)\right|^{2}\leq\sum_{i}\sum_{j}c_{i}c_{j}K\left(x_{i},x_{j}\right),\;\forall n_{0}\in\mathbb{N}.
\]
Indeed, 
\begin{align*}
0\leq\left|\sum_{i}c_{i}e_{n_{0}}\left(x_{i}\right)\right|^{2} & =\sum_{i}\sum_{j}c_{i}c_{j}e_{n_{0}}\left(x_{i}\right)e_{n_{0}}\left(x_{j}\right)\\
 & \leq\sum_{i}\sum_{j}c_{i}c_{j}\sum_{n\in\mathbb{N}}e_{n}\left(x_{i}\right)e_{n}\left(x_{j}\right)\\
 & =\sum_{i}\sum_{j}c_{i}c_{j}K\left(x_{i},x_{j}\right).
\end{align*}
This implies that $e_{n_{0}}\in\mathscr{H}\left(K\right)$, for all
$n_{0}\in\mathbb{N}$. Now, for all $G=\sum_{\text{finite}}c_{i}K\left(\cdot,x_{i}\right)\in\mathscr{H}\left(K\right)$,
it follows that 
\begin{align*}
\sum_{n\in\mathbb{N}}\left|\left\langle G,e_{n}\right\rangle _{\mathscr{H}\left(K\right)}\right|^{2} & =\left\Vert G\right\Vert _{\mathscr{H}\left(K\right)}^{2}=\sum c_{i}c_{j}K\left(x_{i},x_{j}\right),
\end{align*}
which extends by density and linearity to all $G\in\mathscr{H}\left(K\right)$. 

Part (2). Given (\ref{eq:t7}), if $G\in\mathscr{H}\left(K\right)$,
then 
\begin{equation}
G\left(t\right)=\sum\left\langle G,e_{n}\right\rangle _{\mathscr{H}\left(K\right)}e_{n}\left(t\right),\label{eq:t8}
\end{equation}
(norm convergence) and 
\begin{align*}
\left|l\left(G\right)\right|^{2} & =\left|\sum\left\langle G,e_{n}\right\rangle _{\mathscr{H}\left(K\right)}l\left(e_{n}\right)\right|^{2}\\
 & \leq\sum\left|\left\langle G,e_{n}\right\rangle _{\mathscr{H}\left(K\right)}\right|^{2}\sum\left|l\left(e_{n}\right)\right|^{2}\\
 & \leq C_{l}\left\Vert G\right\Vert _{\mathscr{H}\left(K\right)}^{2}
\end{align*}
using the Cauchy-Schwarz inequality. 

Define $V:\mathscr{H}\left(K\right)\rightarrow l^{2}\left(\mathbb{N}\right)$
by 
\[
V\left(G\right)=\left(\left\langle G,e_{n}\right\rangle _{\mathscr{H}\left(K\right)}\right)_{n\in\mathbb{N}}.
\]
Then $V$ is isometric by (1). Moreover, the adjoint $V^{*}:l^{2}\left(\mathbb{N}\right)\rightarrow\mathscr{H}\left(K\right)$
is given by 
\[
V^{*}\left(\left(a_{n}\right)\right)\left(t\right)=\sum a_{n}e_{n}\left(t\right)\in\mathscr{H}\left(K\right),
\]
so that $V^{*}V=I_{\mathscr{H}\left(K\right)}$. Note that (\ref{eq:t8})
is a canonical representation, but it is not unique unless $\left(e_{n}\right)$
is assumed to be an ONB and not just a Parseval frame.

More details: Suppose $l$ is a linear functional on $\mathscr{H}\left(K\right)$
and $\left(l\left(e_{n}\right)\right)\in l^{2}$, then 
\[
F=F_{l}=\sum l\left(e_{n}\right)e_{n}\in\mathscr{H}\left(K\right)
\]
is well defined, since $\left\{ e_{n}\right\} $ is a Parseval frame.
We also have 
\begin{align*}
l\left(G\right) & =\sum\left\langle G,e_{n}\right\rangle _{\mathscr{H}\left(K\right)}l\left(e_{n}\right)\\
 & =\left\langle \sum l\left(e_{n}\right)e_{n},G\right\rangle _{\mathscr{H}\left(K\right)}=\left\langle F_{l},G\right\rangle _{\mathscr{H}\left(K\right)},
\end{align*}
which is the desired conclusion, i.e., $T_{K}\left(l\right)=F_{l}$
and $T_{K}^{*}\left(F_{l}\right)=l$.
\end{proof}
\begin{example}
Let $S=\left[0,1\right]$, $K\left(s,t\right)=s\wedge t$. This is
a \emph{relative kernel}, since we specify 
\[
\mathscr{H}\left(K\right)=\left\{ F:F'\in L^{2}\left(0,1\right),F\left(0\right)=0\right\} ,
\]
with $\left\Vert F\right\Vert _{\mathscr{H}\left(K\right)}^{2}=\int_{0}^{1}\left|F'\left(s\right)\right|^{2}ds$,
and we then get
\begin{align*}
\left\langle F\left(\cdot\right),\cdot\wedge t\right\rangle _{\mathscr{H}\left(K\right)} & =\int_{0}^{1}F'\left(s\right)\chi_{\left[0,t\right]}\left(s\right)ds\\
 & =\int_{0}^{t}F'\left(s\right)ds\\
 & =F\left(t\right)-F\left(0\right)=F\left(t\right).
\end{align*}
In this example, 
\begin{align*}
\left(T_{K}\mu\right)\left(t\right) & =\int\mu\left(ds\right)s\wedge t=t-\frac{1}{2}t^{2},\\
\mu K\mu & =\left\Vert T_{K}\mu\right\Vert _{\mathscr{H}\left(K\right)}^{2}=\int_{0}^{1}\left|\left(T_{K}\mu\right)'\left(t\right)\right|^{2}dt=\frac{1}{3},
\end{align*}
where $\left(T_{K}\mu\right)'\left(t\right)=\mu\left(\left[t,1\right]\right)$.

The example below serves to illustrate that the functionals from Definition
\ref{def:m2} and Theorem \ref{thm:kernel} might in fact be Schwartz
distributions.
\end{example}

\begin{example}
\label{exa:Kst1} Let $K\left(s,t\right)=\frac{1}{1-st}$, $s,t\in\left(-1,1\right)$,
and let $\mathscr{H}\left(K\right)$ be the corresponding RKHS. The
kernel $K$ is p.d. since 
\[
K\left(s,t\right)=\sum_{0}^{\infty}s^{n}t^{n}=\sum_{0}^{\infty}e_{n}\left(s\right)e_{n}\left(t\right),
\]
where $e_{n}\left(t\right)=t^{n}$, and $\left\{ e_{n}\right\} $
is an ONB in $\mathscr{H}\left(K\right)$. Now fix $n>0$ and consider
the linear functional
\begin{equation}
l\left(G\right)=\left\langle G,e_{n}\left(\cdot\right)\right\rangle _{\mathscr{H}\left(K\right)}.\label{eq:t4}
\end{equation}
Then $\left|l\left(G\right)\right|\leq\left\Vert G\right\Vert _{\mathscr{H}\left(K\right)}\left\Vert e_{n}\right\Vert _{\mathscr{H}\left(K\right)}\leq\left\Vert G\right\Vert _{\mathscr{H}\left(K\right)}$,
by Schwarz and the fact that $\left\Vert e_{n}\right\Vert _{\mathscr{H}\left(K\right)}=1$.
It follows that $C_{l}=1$. 

However, the LHS of (\ref{eq:t4}) cannot be realized by a measure
on $\left(-1,1\right)$. In fact, the unique solution is given by
$l=\frac{1}{n!}\delta_{0}^{\left(n\right)}\in\mathscr{L}_{2}\left(K\right)$,
where $\delta_{0}^{\left(n\right)}$ is the Schwartz distribution
(not a measure of course): $\delta_{0}^{\left(n\right)}\left(\varphi\right)=\left(-1\right)^{n}\varphi^{\left(n\right)}\left(0\right)$,
$\varphi^{\left(n\right)}=\left(\frac{d}{dt}\right)^{n}\varphi$,
for all $\varphi\in C_{c}\left(-1,1\right)$. 

In this example, $\delta_{0}^{\left(n\right)}\in\mathscr{L}_{2}\left(K\right)$,
but \emph{not} in $\mathfrak{M}_{2}\left(K\right)$. However, it is
in the closure of $\mathfrak{M}_{2}\left(K\right)$. The plan is therefore
to extend from the space of $\mu=\sum_{i}c_{i}\delta_{s_{i}}$ to
the more general measures, and (in some examples) to the case of Schwartz
distributions.

Moreover, we have 
\begin{equation}
\delta_{x_{0}}=\sum_{n\in\mathbb{N}_{0}}\frac{x_{0}^{n}}{n!}\delta_{0}^{\left(n\right)},\;\left|x_{0}\right|<1.\label{eq:t12}
\end{equation}
One may apply $T_{K}$ to both sides of (\ref{eq:t12}), and get 
\begin{align*}
\text{LHS}_{\left(\ref{eq:t12}\right)} & =T_{K}\left(\delta_{x_{0}}\right)\left(t\right)=\frac{1}{1-x_{0}t}=K\left(x_{0},t\right),\\
\text{RHS}_{\left(\ref{eq:t12}\right)} & =\sum_{n\in\mathbb{N}_{0}}\frac{x_{0}^{n}}{n!}\left(T_{K}\left(\delta_{0}^{\left(n\right)}\right)\right)\left(t\right)=\sum_{n\in\mathbb{N}_{0}}\frac{x_{0}^{n}}{n!}n!e_{n}\left(t\right)=\sum_{n\in\mathbb{N}_{0}}x_{0}^{n}t^{n}=K\left(x_{0},t\right).
\end{align*}
Note that (\ref{eq:t12}) lives in $\mathscr{L}_{2}\left(K\right)$
and so depends on $K$. We will later consider the case $K\left(s,t\right)=e^{st}$. 
\end{example}

\subsection{\label{subsec:The-isomorphism}The isomorphism $T_{K}:\mathscr{L}_{2}\left(K\right)\rightarrow\mathscr{H}\left(K\right)$}

\textbf{Motivation.} The RKHS $\mathscr{H}\left(K\right)$ is defined
abstractly wheres $\mathfrak{M}_{2}\left(K\right)$ and $\mathscr{L}_{2}\left(K\right)$
are explicit (see Def. \ref{def:L2} \& \ref{def:m2}), and more ``natural''.
Starting with a p.d. kernel $K$ on $S$, then the goal is to identify
Hilbert spaces of measures $\mathfrak{M}_{2}\left(K\right)$, or Hilbert
spaces of Schwartz distributions, say $\mathscr{L}_{2}\left(K\right)$,
which can give a more explicit norm, with a natural isometry $\mathscr{L}_{2}\left(K\right)\xrightarrow{T_{K}}\mathscr{H}\left(K\right)$.

The mapping $T_{K}$ may be defined for measures $\mu$, viewed as
elements in $\mathscr{L}_{2}\left(K\right)$, as the generalized integral
operator: 
\begin{defn}
For $\mu\in\mathfrak{M}_{2}\left(K\right)$, set 
\end{defn}

\begin{equation}
\left(T_{K}\mu\right)\left(t\right)=\int\mu\left(ds\right)K\left(s,t\right),\;\mu\in\mathfrak{M}_{2}\left(K\right).
\end{equation}

For $l\in\mathscr{L}_{2}\left(K\right)$, by Riesz there exists a
unique $F=F_{l}\in\mathscr{H}\left(K\right)$ such that $l\left(G\right)=\left\langle F_{l},G\right\rangle _{\mathscr{H}\left(K\right)}$,
for all $G\in\mathscr{H}\left(K\right)$. 
\begin{defn}
For $l\in\mathscr{L}_{2}\left(K\right)$, set 
\begin{equation}
T_{K}\left(l\right)=F_{l},\;\text{and so }T_{K}^{*}\left(F_{l}\right)=l.
\end{equation}
Alternatively, one may define 
\begin{equation}
F_{l}:=T_{K}\left(l\right)=l\left(\text{acting in \ensuremath{s}}\right)\left(K\left(s,\cdot\right)\right),
\end{equation}
by the reproducing property.
\end{defn}

\begin{lem}
Let $K$ be p.d. on $S\times S$ with the cylinder $\sigma$-algebra
$\mathscr{B}_{\left(S,K\right)}$. Let $\mu$ be a positive measure
on $\mathscr{B}_{\left(S,K\right)}$ such that $\int K\left(s,s\right)\mu\left(ds\right)<\infty$,
then $L^{2}\left(\mu\right)\hookrightarrow\mathscr{H}\left(K\right)$
is a bounded embedding, i.e., $fd\mu\in\mathfrak{M}_{2}\left(K\right)$,
for all $f\in L^{2}\left(\mu\right)$. 
\end{lem}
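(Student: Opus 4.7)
The plan is to construct the embedding concretely on simple functions, obtain the norm bound by exploiting the Cauchy--Schwarz-type inequality $|K(s,t)|^{2}\le K(s,s)K(t,t)$ available for any p.d.\ kernel, and then extend by density, composing at the end with the already-established isometry $T_{K}:\mathfrak{M}_{2}(K)\to\mathscr{H}(K)$. I read the hypothesis as providing finiteness of the diagonal integral $\int K(s,s)\mu(ds)<\infty$ (this is what the displayed inequality forces upon integrating in $t$; compare Definition \ref{def:m2}), since that is precisely what is needed for the key estimate.

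First I would fix a simple function $f=\sum_{i=1}^{n}c_{i}\chi_{A_{i}}$ with the $A_{i}\in\mathscr{B}_{(S,K)}$ pairwise disjoint, and form the signed measure $\nu_{f}:=f\,d\mu$. Directly from Definition \ref{def:m2},
\[
\|\nu_{f}\|_{\mathfrak{M}_{2}(K)}^{2}=\iint f(s)f(t)K(s,t)\,\mu(ds)\mu(dt).
\]
Applying $|K(s,t)|\le \sqrt{K(s,s)K(t,t)}$ (which follows from positive definiteness of $K$ on the $2\times 2$ matrix $(K(s,s),K(s,t);K(t,s),K(t,t))$), and then Cauchy--Schwarz in $L^{2}(\mu)$, I get
\[
\|\nu_{f}\|_{\mathfrak{M}_{2}(K)}^{2}\le\left(\int|f(s)|\sqrt{K(s,s)}\,\mu(ds)\right)^{2}\le\|f\|_{L^{2}(\mu)}^{2}\int K(s,s)\,\mu(ds),
\]
where the right-hand factor is finite by hypothesis. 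This is the core estimate and it already shows $\nu_{f}\in\mathfrak{M}_{2}(K)$ with a uniform operator-norm bound.

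Next I would extend the assignment $f\mapsto f\,d\mu$ from simple functions to all of $L^{2}(\mu)$ by density: simple functions are dense in $L^{2}(\mu)$, and the estimate above shows that $f\mapsto \nu_{f}$ is bounded from (the simple-function dense subspace of) $L^{2}(\mu)$ into $\mathfrak{M}_{2}(K)$; it therefore extends by continuity to a bounded linear map on all of $L^{2}(\mu)$, and the limiting element is still represented as the signed measure $f\,d\mu$ by standard $\sigma$-additivity/approximation arguments on $\mathscr{B}_{(S,K)}$. Composing with the isometry $T_{K}:\mathfrak{M}_{2}(K)\to\mathscr{H}(K)$ from Section \ref{subsec:The-isomorphism}, the bounded embedding $L^{2}(\mu)\hookrightarrow\mathscr{H}(K)$ is given explicitly by
\[
f\longmapsto T_{K}(f\,d\mu)\left(t\right)=\int f(s)K(s,t)\,\mu(ds),
\]
with operator norm at most $\bigl(\int K(s,s)\,\mu(ds)\bigr)^{1/2}$.

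The step I expect to be the most delicate is not the estimate itself but the density/extension step: one must verify that the limiting object in $\mathfrak{M}_{2}(K)$ of $\nu_{f_{n}}$ for an $L^{2}(\mu)$-Cauchy sequence $\{f_{n}\}$ is genuinely the signed measure $f\,d\mu$ on $\mathscr{B}_{(S,K)}$, rather than just an abstract Cauchy-completion element. This is handled by testing against sets $A\in\mathscr{B}_{(S,K)}$ of finite $\mu$-measure (for which $\chi_{A}\in L^{2}(\mu)$) and using the bilinear form from Definition \ref{def:m2}, showing that Cauchy convergence in $\mathfrak{M}_{2}(K)$ together with the domination $|f_{n}|\le g\in L^{2}(\mu)$ forces setwise convergence to $\int_{A}f\,d\mu$, which identifies the limit with $f\,d\mu$.
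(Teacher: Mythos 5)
Your proof is correct and rests on the same key estimate as the paper's: both arguments reduce to the bound $\left\Vert K(s,\cdot)\right\Vert _{\mathscr{H}(K)}=\sqrt{K(s,s)}$ (your pointwise inequality $|K(s,t)|\le\sqrt{K(s,s)K(t,t)}$ is just Cauchy--Schwarz in $\mathscr{H}(K)$ for the kernel sections), followed by Cauchy--Schwarz in $L^{2}(\mu)$, giving the operator-norm bound $\bigl(\int K(s,s)\,\mu(ds)\bigr)^{1/2}$; your reading of the hypothesis as finiteness of the diagonal integral is also exactly what the paper's proof uses. The one structural difference is your simple-function/density scaffolding: the paper instead estimates the $\mathscr{H}(K)$-valued integral $\int f(s)K(s,\cdot)\,\mu(ds)$ directly for an arbitrary $f\in L^{2}(\mu)$, and your own double-integral estimate likewise holds verbatim for every $f\in L^{2}(\mu)$ by Tonelli, which verifies the defining condition for $f\,d\mu\in\mathfrak{M}_{2}(K)$ outright. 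Consequently the step you single out as delicate --- identifying the abstract $\mathfrak{M}_{2}(K)$-limit of $\nu_{f_{n}}$ with the concrete measure $f\,d\mu$ --- can be bypassed entirely; the density argument buys nothing here and only introduces the limit-identification issue you then have to patch.
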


\begin{proof}
Given $f\in L^{2}\left(\mu\right)$, then 
\begin{eqnarray*}
\left\Vert \int f\left(s\right)K\left(s,\cdot\right)\mu\left(ds\right)\right\Vert _{\mathscr{H}\left(K\right)} & \leq & \int\left|f\left(s\right)\right|\left\Vert K\left(s,\cdot\right)\right\Vert _{\mathscr{H}\left(K\right)}\mu\left(ds\right)\\
 & \underset{\mathclap{\left(\text{Schwarz}\right)}}{\leq} & \left\Vert f\right\Vert _{L^{2}\left(\mu\right)}\left(\int K\left(s,s\right)\mu\left(ds\right)\right)^{1/2}\\
 & = & C_{\mu}^{1/2}\left\Vert f\right\Vert _{L^{2}\left(\mu\right)}.
\end{eqnarray*}
Hence, $\int K\left(s,\cdot\right)f\left(s\right)\mu\left(ds\right)=T_{K}\left(fd\mu\right)\in\mathscr{H}\left(K\right)$
and 
\begin{align*}
\left\Vert T_{K}\left(fd\mu\right)\right\Vert _{\mathscr{H}\left(K\right)}^{2} & =\iint f\left(s\right)K\left(s,t\right)f\left(t\right)\mu\left(ds\right)\mu\left(dt\right)\\
 & =\left(fd\mu\right)K\left(fd\mu\right)\\
 & =\left\Vert fd\mu\right\Vert _{\mathfrak{M}_{2}\left(K\right)}^{2}.
\end{align*}
\end{proof}
\begin{lem}
If $\mu\in\mathfrak{M}_{2}\left(K\right)$, then 
\[
\left(T_{K}\mu\right)\left(t\right)=\int\mu\left(ds\right)K\left(s,t\right)
\]
is well defined, and $\left(T_{K}\mu\right)\left(\cdot\right)\in\mathscr{H}\left(K\right)$.
Moreover, $T_{K}:\mathfrak{M}_{2}\left(K\right)\rightarrow\mathscr{H}\left(K\right)$
is isometric, where 
\begin{equation}
\left\Vert T_{K}\mu\right\Vert _{\mathscr{H}\left(K\right)}^{2}=\left\Vert \mu\right\Vert _{\mathfrak{M}_{2}\left(K\right)}^{2}=\iint\mu\left(ds\right)K\left(s,t\right)\mu\left(dt\right).\label{eq:t5}
\end{equation}
\end{lem}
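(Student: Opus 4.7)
The plan is to establish the lemma first on the dense subspace of finite atomic measures---where everything reduces to direct evaluation via the reproducing property---and then extend to all of $\mathfrak{M}_{2}\left(K\right)$ by isometric continuity; the pointwise integral formula is recovered at the end through the reproducing kernel.

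\textbf{Step 1 (atomic case).} For $\mu=\sum_{i=1}^{n}c_{i}\delta_{s_{i}}$ with $c_{i}\in\mathbb{R}$ and $s_{i}\in S$, the formula for $T_{K}\mu$ collapses to $\left(T_{K}\mu\right)\left(t\right)=\sum_{i}c_{i}K\left(s_{i},t\right)$, a finite linear combination of the kernel sections $K\left(s_{i},\cdot\right)\in\mathscr{H}\left(K\right)$; hence $T_{K}\mu\in\mathscr{H}\left(K\right)$. Expanding the norm and using $\left\langle K\left(s_{i},\cdot\right),K\left(s_{j},\cdot\right)\right\rangle _{\mathscr{H}\left(K\right)}=K\left(s_{i},s_{j}\right)$ gives
\[
\left\Vert T_{K}\mu\right\Vert _{\mathscr{H}\left(K\right)}^{2}=\sum_{i,j}c_{i}c_{j}K\left(s_{i},s_{j}\right)=\iint\mu\left(ds\right)K\left(s,t\right)\mu\left(dt\right)=\left\Vert \mu\right\Vert _{\mathfrak{M}_{2}\left(K\right)}^{2},
\]
which is (\ref{eq:t5}) on this dense subspace.

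\textbf{Step 2 (extension by continuity).} For general $\mu\in\mathfrak{M}_{2}\left(K\right)$, I approximate $\mu$ in the $\left\Vert \cdot\right\Vert _{\mathfrak{M}_{2}\left(K\right)}$ seminorm by a sequence of finite atomic measures $\mu_{n}$. By Step 1, the sequence $\left(T_{K}\mu_{n}\right)$ is Cauchy in $\mathscr{H}\left(K\right)$, hence converges; I define $T_{K}\mu$ to be this limit, and the isometry identity (\ref{eq:t5}) persists in the limit. To identify the limit with the integral, fix $t\in S$ and use the reproducing property:
\[
\left(T_{K}\mu\right)\left(t\right)=\left\langle T_{K}\mu,K\left(\cdot,t\right)\right\rangle _{\mathscr{H}\left(K\right)}=\lim_{n}\left\langle T_{K}\mu_{n},K\left(\cdot,t\right)\right\rangle _{\mathscr{H}\left(K\right)}=\lim_{n}\int K\left(s,t\right)\mu_{n}\left(ds\right),
\]
which realizes $\int K\left(s,t\right)\mu\left(ds\right)$ as the convergent limit on the right and shows that it equals $\left(T_{K}\mu\right)\left(t\right)$.

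\textbf{Main obstacle.} The delicate point is the density of finite atomic measures in $\mathfrak{M}_{2}\left(K\right)$ under $\left\Vert \cdot\right\Vert _{\mathfrak{M}_{2}\left(K\right)}$: this seminorm couples $\mu$ to itself through $K$ rather than measuring it against a fixed reference measure, so standard simple-function density arguments do not apply verbatim. In practice one either constructs atomic approximations tailored to $K$ (using refining partitions adapted to $s\mapsto K\left(s,s\right)d\left|\mu\right|\left(s\right)$ when this measure is finite) or, more invariantly, interprets $\mathfrak{M}_{2}\left(K\right)$ as the abstract Hilbert completion of $\operatorname{span}\left\{ \delta_{s}:s\in S\right\} $ under $\left\langle \cdot,\cdot\right\rangle _{\mathfrak{M}_{2}}$; then $T_{K}$ is the unique isometric extension of $\delta_{s}\mapsto K\left(s,\cdot\right)$, and the integral representation $\left(T_{K}\mu\right)\left(t\right)=\int K\left(s,t\right)\mu\left(ds\right)$ holds whenever the integral converges in the Bochner sense in $\mathscr{H}\left(K\right)$---a setting guaranteed by the preceding lemma whenever $\mu$ is absolutely continuous with $L^{2}$ density against a reference measure $\nu$ satisfying $\int K\left(s,s\right)d\nu\left(s\right)<\infty$.
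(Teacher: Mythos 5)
Your proof is correct in substance but takes a genuinely different route from the paper's. The paper proves (\ref{eq:t5}) by a direct weak-integral computation: it asserts that for $\mu\in\mathfrak{M}_{2}\left(K\right)$ the interchange of $\left\langle \cdot,\cdot\right\rangle _{\mathscr{H}\left(K\right)}$ with $\int\left(\cdot\right)\mu\left(ds\right)$ is justified, and then writes $\left\langle T_{K}\mu,T_{K}\mu\right\rangle _{\mathscr{H}\left(K\right)}=\int\left(T_{K}\mu\right)\left(t\right)\mu\left(dt\right)=\iint\mu\left(ds\right)K\left(s,t\right)\mu\left(dt\right)$ --- essentially a Pettis/Fubini argument whose justification is deferred. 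You instead verify the isometry on finite atomic measures, where it is an immediate consequence of the reproducing property, and then extend $T_{K}$ by isometric continuity, recovering the pointwise formula via $\left(T_{K}\mu\right)\left(t\right)=\left\langle T_{K}\mu,K\left(\cdot,t\right)\right\rangle _{\mathscr{H}\left(K\right)}$. This is in fact the strategy the paper adopts in the \emph{next} lemma, where $T_{K}\mu$ is realized as the $\left\Vert \cdot\right\Vert _{\mathscr{H}\left(K\right)}$-limit of $\sum_{i}\mu\left(E_{i}\right)K\left(s_{i},\cdot\right)$ over a filter of partitions, so your route is consistent with the paper's broader development and arguably more constructive. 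You are also right to flag the density of atomic measures in $\mathfrak{M}_{2}\left(K\right)$ as the genuine obstacle: the paper's one-line claim that ``the exchange can be justified'' hides exactly the same difficulty, and your two proposed resolutions (partition refinements adapted to $K$, or treating $\mathfrak{M}_{2}\left(K\right)$ as the abstract completion of $\operatorname{span}\left\{ \delta_{s}\right\} $ so that $T_{K}$ is the unique isometric extension of $\delta_{s}\mapsto K\left(s,\cdot\right)$) are both legitimate; the second is closer to how the paper ultimately relates $\mathfrak{M}_{2}\left(K\right)$ to $\mathscr{L}_{2}\left(K\right)$. The one step to state more carefully is the last display of your Step 2: the limit $\lim_{n}\int K\left(s,t\right)\mu_{n}\left(ds\right)$ exists and equals $\left(T_{K}\mu\right)\left(t\right)$, but identifying it with the Lebesgue integral $\int K\left(s,t\right)\mu\left(ds\right)$ requires either $K\left(\cdot,t\right)\in L^{1}\left(\left|\mu\right|\right)$ plus a suitable convergence of $\mu_{n}$ to $\mu$, or (as the paper does) taking the limit itself as the definition of the integral.
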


\begin{proof}
With the assumption $\mu\in\mathfrak{M}_{2}\left(K\right)$, the exchange
of $\left\langle \cdot,\cdot\right\rangle _{\mathscr{H}\left(K\right)}$
and integral with respect to $\mu$ can be justified. Thus, 
\begin{align*}
\text{LHS}_{\left(\ref{eq:t5}\right)} & =\left\langle T_{K}\mu,T_{K}\mu\right\rangle _{\mathscr{H}\left(K\right)}\\
 & =\int\left(T_{K}\mu\right)\left(t\right)\mu\left(dt\right)\\
 & =\iint\mu\left(ds\right)K\left(s,t\right)\mu\left(dt\right)\\
 & =\mu K\mu=\left\Vert \mu\right\Vert _{\mathfrak{M}_{2}\left(K\right)}^{2}=\text{RHS}_{\left(\ref{eq:t5}\right)}.
\end{align*}
\end{proof}
\begin{defn}
By a partition $\left\{ E_{i}\right\} $ of $S$, we mean $E_{i}\in\mathscr{B}_{\left(S,K\right)}$
of finite measure, $E_{i}\cap E_{j}=\emptyset$ if $i\neq j$, and
$\cup E_{i}=S$. 
\end{defn}

\begin{lem}
Let $\mu$ be a signed measure, then $\mu\in\mathfrak{M}_{2}\left(K\right)$
if and only if 
\[
\left\Vert \sum_{i}\mu\left(E_{i}\right)K\left(s_{i},\cdot\right)\right\Vert _{\mathscr{H}\left(K\right)}^{2}=\sup_{\text{all partitions}}\sum_{i}\sum_{j}\mu\left(E_{i}\right)K\left(s_{i},s_{j}\right)\mu\left(E_{j}\right)<\infty
\]
with $s_{i}\in E_{i}$. In this case, we have the following norm limit
\[
\lim\left\Vert T_{K}\mu-\sum_{i}\mu\left(E_{i}\right)K\left(s_{i},\cdot\right)\right\Vert _{\mathscr{H}\left(K\right)}=0,
\]
where the limit is taken over filters of partitions. 
\end{lem}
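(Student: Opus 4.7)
\emph{Plan.} For each partition $\pi=\{E_i\}$ of $S$ with chosen representatives $s_i\in E_i$, I associate the finite atomic signed measure $\mu_{\pi}:=\sum_i \mu(E_i)\delta_{s_i}$ and its image $f_{\pi}:=T_K(\mu_{\pi})=\sum_i \mu(E_i) K(s_i,\cdot)\in\mathscr{H}(K)$. Since $\mu_{\pi}$ is a finite atomic measure it lies automatically in $\mathfrak{M}_2(K)$, so the isometry of $T_K$ established in the preceding lemma yields
\[
\|f_{\pi}\|_{\mathscr{H}(K)}^{2}=\|\mu_{\pi}\|_{\mathfrak{M}_2(K)}^{2}=\sum_{i,j}\mu(E_i) K(s_i,s_j)\mu(E_j),
\]
which is exactly the left-hand side of the asserted sup-identity. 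The whole argument will then reduce to a Riemann-sum analysis: as $\pi$ ranges over the directed family of partitions under refinement, the right-hand sums are product Riemann-type approximations of $\iint K(s,t)\mu(ds)\mu(dt)=\mu K \mu$ on $S\times S$ with representatives $(s_i,s_j)$.

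\emph{Forward direction.} If $\mu\in\mathfrak{M}_2(K)$, then $K\in L^1(|\mu|\otimes|\mu|)$. Joint measurability of $K$ with respect to $\mathscr{B}_{(S,K)}\otimes\mathscr{B}_{(S,K)}$ is built into the cylinder $\sigma$-algebra (which is generated precisely so that every section $K(\cdot,t)$ is measurable), so the standard approximation of integrable functions by simple ones, combined with dominated convergence, shows the partition sums $\mu_{\pi}K\mu_{\pi}$ converge to $\mu K \mu$ along the refinement filter. In particular, the sup does not exceed $\mu K \mu<\infty$.

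\emph{Reverse direction and norm limit.} Conversely, assume the sup is finite. Applying the same Riemann-sum interpretation after passing to the Jordan decomposition $\mu=\mu_+-\mu_-$ and refining so as to separate the Hahn sets, and then invoking monotone convergence for the positive measures $\mu_\pm$, yields $\iint K\,d|\mu|\otimes d|\mu|<\infty$, so $\mu\in\mathfrak{M}_2(K)$. For the claimed norm limit, polarize in the pre-Hilbert $\mathfrak{M}_2(K)$-inner product:
\[
\|\mu-\mu_{\pi}\|_{\mathfrak{M}_2(K)}^{2}=\mu K\mu-2\,\mu K\mu_{\pi}+\mu_{\pi}K\mu_{\pi}.
\]
The third term tends to $\mu K\mu$ by the Riemann argument above, and the cross term $\mu K\mu_{\pi}=\sum_i \mu(E_i)(T_K\mu)(s_i)$ is a one-dimensional Riemann sum approximating $\int(T_K\mu)\,d\mu=\mu K\mu$, which converges by the same reasoning applied to the $\mathscr{B}_{(S,K)}$-measurable function $T_K\mu$. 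Hence $\|\mu-\mu_{\pi}\|_{\mathfrak{M}_2(K)}\to 0$, and by the isometry of $T_K$ we get $\|T_K\mu-f_{\pi}\|_{\mathscr{H}(K)}\to 0$ along the refinement filter.

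\emph{Main obstacle.} The principal technical step is Riemann-sum convergence along the filter of refinements without assuming any continuity of $K$. I plan to handle this by approximating $K$ (and later $T_K\mu$) by simple functions on the cylinder $\sigma$-algebra and passing to the limit via dominated convergence. A secondary subtlety, specific to signed $\mu$, is that finiteness of the nonnegative partition sums $\|\mu_\pi\|^2_{\mathfrak{M}_2(K)}$ does not on its face imply absolute integrability of $K$ under $|\mu|\otimes|\mu|$; the Jordan-decomposition-plus-Hahn-refinement step above is designed precisely to reduce this case to two positive measures whose partition sums are monotone under refinement, where the conclusion follows from monotone convergence.
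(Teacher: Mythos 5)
Your skeleton agrees with the paper's: you identify $\sum_i\mu(E_i)K(s_i,\cdot)$ as $T_K$ of the atomic measure $\mu_\pi$, compute its squared norm as the double sum, and derive the norm limit from convergence of the cross term together with convergence of the squared norms, which is the paper's ``weak convergence plus convergence of norms'' argument recast through polarization. However, two of your supporting steps have genuine gaps. First, in the reverse direction you assert that after a Hahn refinement the partition sums for $\mu_\pm$ are monotone under refinement, so that monotone convergence applies. They are not: refining $E_i$ into $E_i^1\cup E_i^2$ replaces the single vector $\mu(E_i)K(s_i,\cdot)$ by $\mu(E_i^1)K(s_i^1,\cdot)+\mu(E_i^2)K(s_i^2,\cdot)$ with freshly chosen tags, and the squared $\mathscr{H}(K)$-norm of the sum can go either up or down. (Contrast the genuinely monotone quantity $\sum_i|F(A_i)|^2/\nu(A_i)$ used earlier in the paper for the kernel $\nu(A\cap B)$, whose monotonicity comes from Cauchy--Schwarz, not from tagged evaluations of $K$.) So finiteness of the sup does not give $\iint K\,d|\mu|\,d|\mu|<\infty$ by this route.

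Second, your plan for the ``main obstacle'' --- approximate $K$ (and $T_K\mu$) by simple functions and invoke dominated convergence --- cannot control a tagged Riemann sum. The sums evaluate the integrand at the representatives $s_i$, and an $L^1$ or almost-everywhere approximation by simple functions says nothing about values at the tags: the error $\sum_i\bigl|(f-f_{\mathrm{simple}})(s_i)\bigr|\,|\mu|(E_i)$ is not bounded by $\int|f-f_{\mathrm{simple}}|\,d|\mu|$, and for merely measurable integrands the tagged sums along the refinement filter need not converge to the Lebesgue integral at all. The mechanism that does work, and the one the paper uses (tersely), is to stay inside the RKHS: by the reproducing property one has the exact identity $\sum_iG(s_i)\mu(E_i)=\bigl\langle\sum_i\mu(E_i)K(s_i,\cdot),G\bigr\rangle_{\mathscr{H}(K)}$, so the convergence of your cross term is precisely weak convergence of $T(P)$ to $T_K\mu$ tested against $G=T_K\mu$; no pointwise approximation of $K$ on $S\times S$ is required. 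Be aware also that the paper's own proof addresses only the forward implication and the norm limit; the converse is left unargued there as well, so your attempt to supply it is welcome but needs a different device than monotone convergence.
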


\begin{proof}
For $\mu\in\mathfrak{M}_{2}\left(K\right)$, we must make precise
$\int G\left(s\right)\mu\left(ds\right)$ as a well defined integral.
Again, we take limits over all partitions $\mathscr{P}$ of $S$:
\begin{align*}
\sum_{i}G\left(s_{i}\right)\mu\left(E_{i}\right) & =\sum\mu\left(E_{i}\right)\left\langle K\left(s_{i},\cdot\right),G\left(\cdot\right)\right\rangle _{\mathscr{H}\left(K\right)}\\
 & =\left\langle \sum\mu\left(E_{i}\right)K\left(s_{i},\cdot\right),G\left(\cdot\right)\right\rangle _{\mathscr{H}\left(K\right)}\rightarrow\left\langle T_{K}\mu,G\right\rangle _{\mathscr{H}\left(K\right)},
\end{align*}
and so $T_{K}\mu$ is the $\left\Vert \cdot\right\Vert _{\mathscr{H}\left(K\right)}$-norm
limit of $\sum\mu\left(E_{i}\right)K\left(s_{i},\cdot\right)$. 

More specifically, given a partition $P=\left\{ E_{i}\right\} \in\mathscr{P}$,
let 
\[
T\left(P\right):=\sum_{i}\mu\left(E_{i}\right)K\left(s_{i},\cdot\right)\in\mathscr{H}\left(K\right),
\]
then we have 
\[
\left\Vert T\left(P\right)\right\Vert _{\mathscr{H}\left(K\right)}^{2}=\sum_{i}\sum_{j}\mu\left(E_{i}\right)K\left(s_{i},s_{j}\right)\mu\left(E_{j}\right)
\]
and $T\left(P\right)\rightarrow T_{K}\left(\mu\right)$, as a filter
in partitions. 

In the following two lemmas by a standard application of Fubini's
theorem, one can justify the exchange of the integrals and summations
inside and outsite the inner product\textcolor{blue}{. }
\end{proof}
\begin{lem}
For all $G\in\mathscr{H}\left(K\right)$ and $\mu\in\mathfrak{M}_{2}\left(K\right)$,
we have:
\begin{equation}
\int G\left(t\right)\mu\left(dt\right)=\left\langle G,T_{K}\mu\right\rangle _{\mathscr{H}\left(K\right)}.\label{eq:t6}
\end{equation}
\end{lem}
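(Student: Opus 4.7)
The plan is to prove (\ref{eq:t6}) by a density argument: first verify it on finite atomic measures, then pass to the limit on all of $\mathfrak{M}_{2}(K)$ via the partition net used in the preceding lemma.

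First I would dispose of the atomic case. For $\mu=\delta_{s}$ one has $T_{K}\delta_{s}=K(s,\cdot)$, and the reproducing property immediately gives
\[
\langle G,T_{K}\delta_{s}\rangle_{\mathscr{H}(K)}=\langle G,K(s,\cdot)\rangle_{\mathscr{H}(K)}=G(s)=\int G(t)\,\delta_{s}(dt).
\]
Both sides of (\ref{eq:t6}) are linear in $\mu$, so the identity extends at once to every finite atomic measure $\mu=\sum_{i}c_{i}\delta_{s_{i}}$.

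Next I would pass to a general $\mu\in\mathfrak{M}_{2}(K)$. For each partition $P=\{E_{i}\}$ of $S$ pick $s_{i}\in E_{i}$ and form $\mu_{P}=\sum_{i}\mu(E_{i})\delta_{s_{i}}$, so that $T_{K}\mu_{P}=\sum_{i}\mu(E_{i})K(s_{i},\cdot)=T(P)$ in the notation of the preceding lemma. That lemma already guarantees $T(P)\to T_{K}\mu$ in $\mathscr{H}(K)$-norm along the filter of partitions; hence, by continuity of the inner product,
\[
\sum_{i}G(s_{i})\mu(E_{i})=\langle G,T(P)\rangle_{\mathscr{H}(K)}\longrightarrow\langle G,T_{K}\mu\rangle_{\mathscr{H}(K)},
\]
where the first equality comes from the atomic case applied to $\mu_{P}$. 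The left-hand side is a Riemann-type approximation to $\int G\,d\mu$, so taking limits on both sides will yield (\ref{eq:t6}).

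The only genuine obstacle is identifying $\lim_{P}\sum_{i}G(s_{i})\mu(E_{i})$ with $\int G(t)\,\mu(dt)$, which requires that $G$ be integrable against $|\mu|$. This will follow because $G(s)=\langle G,K(s,\cdot)\rangle_{\mathscr{H}(K)}$ is $\mathscr{B}_{(S,K)}$-measurable and bounded by $\|G\|_{\mathscr{H}(K)}\,K(s,s)^{1/2}$ via Cauchy--Schwarz, while $K(s,s)^{1/2}$ is $|\mu|$-integrable by the $\mathfrak{M}_{2}(K)$ hypothesis $\mu K\mu<\infty$ together with the Hahn decomposition. Alternatively, one may simply take $\int G\,d\mu:=\lim_{P}\sum_{i}G(s_{i})\mu(E_{i})$ as the compatible definition of the integral within the $\mathfrak{M}_{2}(K)$ framework; in either reading, (\ref{eq:t6}) drops out of the preceding lemma with no further calculation.
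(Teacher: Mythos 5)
Your proof is correct in substance but takes a different route from the paper's. The paper proves (\ref{eq:t6}) in one stroke: it writes $G(t)=\left\langle G,K(\cdot,t)\right\rangle _{\mathscr{H}(K)}$, pulls the integral inside the inner product to get $\left\langle G,\int\mu(dt)K(\cdot,t)\right\rangle _{\mathscr{H}(K)}=\left\langle G,T_{K}\mu\right\rangle _{\mathscr{H}(K)}$, and justifies the exchange of $\int$ and $\left\langle \cdot,\cdot\right\rangle _{\mathscr{H}(K)}$ by the standing hypothesis $\mu\in\mathfrak{M}_{2}(K)$. You instead verify the identity on finite atomic measures via the reproducing property and then pass to the limit along the filter of partitions, using the norm convergence $T(P)\rightarrow T_{K}\mu$ from the preceding lemma. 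Your argument is in effect an unpacking of what the paper's one-line justification conceals: it coincides with how the paper itself proves the \emph{preceding} lemma (where $\sum_{i}G(s_{i})\mu(E_{i})=\left\langle \sum_{i}\mu(E_{i})K(s_{i},\cdot),G\right\rangle _{\mathscr{H}(K)}\rightarrow\left\langle T_{K}\mu,G\right\rangle _{\mathscr{H}(K)}$ is derived), and it buys explicitness about what $\int G\,d\mu$ even means in this framework, at the cost of length.

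One step deserves a caveat. Your claim that $K(s,s)^{1/2}$ is $|\mu|$-integrable because $\mu K\mu<\infty$ does not follow: the inequality $\bigl\Vert \int K(s,\cdot)\,|\mu|(ds)\bigr\Vert _{\mathscr{H}(K)}\leq\int\Vert K(s,\cdot)\Vert _{\mathscr{H}(K)}\,|\mu|(ds)$ goes the wrong way, so finiteness of $\iint\mu(ds)K(s,t)\mu(dt)$ controls the left side, not the right. This does not sink the proof, because your fallback reading, namely taking $\int G\,d\mu:=\lim_{P}\sum_{i}G(s_{i})\mu(E_{i})$ as the operative definition of the integral in the $\mathfrak{M}_{2}(K)$ setting, is exactly the convention the paper adopts in the preceding lemma, and under that convention your limit argument closes the proof with nothing further needed. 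You should simply drop the $K(s,s)^{1/2}$ justification rather than rely on it.
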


\begin{proof}
~
\begin{align*}
\text{LHS}_{\left(\ref{eq:t6}\right)} & =\int\left\langle G\left(\cdot\right),K\left(\cdot,t\right)\right\rangle _{\mathscr{H}\left(K\right)}\mu\left(dt\right)\\
 & =\left\langle G,\int\mu\left(dt\right)K\left(\cdot,t\right)\right\rangle _{\mathscr{H}\left(K\right)}\\
 & =\left\langle G,T_{K}\mu\right\rangle _{\mathscr{H}\left(K\right)}.
\end{align*}
We conclude that $\int G\left(t\right)\mu\left(dt\right)=\left\langle T_{K}\mu,G\right\rangle _{\mathscr{H}\left(K\right)}\in\mathbb{\mathbb{R}}$.
Note that since $\mu\in\mathfrak{M}_{2}\left(K\right)$, we may exchange
integral with $\left\langle \cdot,\cdot\right\rangle _{\mathscr{H}\left(K\right)}$.
\end{proof}
\begin{lem}
Let $\left(S,\mathscr{B}_{\left(S,K\right)},K\right)$ be as above,
and consider signed measures $\mu$ on $\left(S,\mathscr{B}_{\left(S,K\right)}\right)$.
Then
\[
\mu\in\mathfrak{M}_{2}\left(K\right)\Longleftrightarrow\left|\int Gd\mu\right|\leq C_{\mu}\left\Vert G\right\Vert _{\mathscr{H}\left(K\right)},\;\forall G\in\mathscr{H}\left(K\right).
\]
\end{lem}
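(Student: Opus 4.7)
The plan is to treat the two implications as dual aspects of the Riesz representation theorem applied to the linear functional $l_\mu(G) := \int G\,d\mu$, together with the reproducing property of $K$.

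For the forward direction, I would assume $\mu \in \mathfrak{M}_2(K)$ and appeal directly to the lemma immediately preceding, which gives the identity $\int G\,d\mu = \langle G, T_K\mu\rangle_{\mathscr{H}(K)}$. Cauchy--Schwarz in $\mathscr{H}(K)$ then yields
\[
\Bigl|\int G\,d\mu\Bigr| \;\le\; \|G\|_{\mathscr{H}(K)}\,\|T_K\mu\|_{\mathscr{H}(K)} \;=\; \|G\|_{\mathscr{H}(K)}\,\|\mu\|_{\mathfrak{M}_2(K)},
\]
so the desired bound holds with $C_\mu = \|\mu\|_{\mathfrak{M}_2(K)}$.

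For the reverse direction, I assume the bound $|\int G\,d\mu| \le C_\mu\|G\|_{\mathscr{H}(K)}$ holds for every $G \in \mathscr{H}(K)$. Then $G \mapsto \int G\,d\mu$ is a bounded linear functional on the Hilbert space $\mathscr{H}(K)$, so by Riesz there exists a unique $F_\mu \in \mathscr{H}(K)$ with $\int G\,d\mu = \langle G, F_\mu\rangle_{\mathscr{H}(K)}$ for all $G$. Specializing to the reproducing kernel, $G = K(\cdot,t)$, gives
\[
\int K(s,t)\,\mu(ds) \;=\; \langle K(\cdot,t), F_\mu\rangle_{\mathscr{H}(K)} \;=\; F_\mu(t).
\]
Integrating once more against $\mu$ and applying the Riesz identity a second time yields
\[
\iint \mu(ds)\,K(s,t)\,\mu(dt) \;=\; \int F_\mu(t)\,\mu(dt) \;=\; \langle F_\mu, F_\mu\rangle_{\mathscr{H}(K)} \;=\; \|F_\mu\|_{\mathscr{H}(K)}^2 \;<\; \infty,
\]
which is exactly the statement $\mu K \mu < \infty$, i.e., $\mu \in \mathfrak{M}_2(K)$. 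As a by-product one also recovers $T_K\mu = F_\mu$, consistent with the earlier definition.

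The only subtle point, and the one I would be most careful about, is the interchange of integration against $\mu$ with the Hilbert-space inner product when passing from the Riesz representer $F_\mu$ back to the double integral $\iint K(s,t)\,\mu(ds)\,\mu(dt)$. This is justified because $F_\mu \in \mathscr{H}(K)$ is a legitimate test function in the assumed bound, so $\int F_\mu\,d\mu$ makes sense and equals $\langle F_\mu, F_\mu\rangle$; once the first integration has been carried out via the reproducing property, the second is merely the evaluation $l_\mu(F_\mu)$. Everything else is a direct application of Riesz plus reproducing, so no further machinery is required.
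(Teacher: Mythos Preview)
Your proof is correct and follows essentially the same route as the paper's: forward via $\int G\,d\mu = \langle G, T_K\mu\rangle_{\mathscr{H}(K)}$ plus Cauchy--Schwarz, and reverse via Riesz followed by testing against $G = K(\cdot,t)$ to identify the representer as $T_K\mu$. Your reverse direction is in fact slightly more explicit than the paper's, since you carry out the second integration to verify $\mu K\mu = \|F_\mu\|^2 < \infty$ directly, whereas the paper stops at $T_K\mu = F \in \mathscr{H}(K)$ and declares $\mu \in \mathfrak{M}_2(K)$ without spelling out that last step.
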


\begin{proof}
Given $\mu\in\mathfrak{M}_{2}\left(K\right)$, then 
\begin{align*}
\left|\int G\left(t\right)\mu\left(dt\right)\right| & =\left|\int\left\langle K\left(\cdot,t\right),G\left(\cdot\right)\right\rangle _{\mathscr{H}\left(K\right)}\mu\left(dt\right)\right|\\
 & \leq\left|\left\langle \int K\left(\cdot,t\right)\mu\left(dt\right),G\right\rangle _{\mathscr{H}\left(K\right)}\right|\\
 & =\left|\left\langle T_{K}\mu,G\right\rangle _{\mathscr{H}\left(K\right)}\right|\\
 & \leq\left\Vert T_{K}\mu\right\Vert _{\mathscr{H}\left(K\right)}\left\Vert G\right\Vert _{\mathscr{H}\left(K\right)},
\end{align*}
and so $C_{\mu}=\left\Vert T_{K}\mu\right\Vert _{\mathscr{H}\left(K\right)}<\infty$. 

Conversely, suppose $\left|\int Gd\mu\right|\leq C_{\mu}\left\Vert G\right\Vert _{\mathscr{H}\left(K\right)}$
for all $G\in\mathscr{H}\left(K\right)$. Then there exists a unique
$F\in\mathscr{H}\left(K\right)$ such that 
\[
\int Gd\mu=\left\langle G,F\right\rangle _{\mathscr{H}\left(K\right)},\;\forall G\in\mathscr{H}\left(K\right).
\]
Now take $G=K\left(\cdot,t\right)$, and by the reproducing property,
we get 
\[
\underset{T_{K}\mu}{\underbrace{\int\mu\left(ds\right)K\left(s,t\right)}}=\left\langle K\left(\cdot,t\right),F\right\rangle _{\mathscr{H}\left(K\right)}=F\left(t\right),
\]
where $T_{K}\mu=F\in\mathscr{H}\left(K\right)$ by Riesz, and $\mu=T_{K}^{*}F$.
We conclude that $\mu\in\mathfrak{M}_{2}\left(K\right)$. 
\end{proof}
In summary, the goal is to pass from (1) atomic measures to (2) $\sigma$-finite
signed measures, and then to (3) linear functionals on $\mathscr{H}\left(K\right)$,
continues with respect to the $\left\Vert \cdot\right\Vert _{\mathscr{H}\left(K\right)}$
norm. The respective mappings are specified as follows: 
\begin{align}
T_{K}:\sum\nolimits _{i} & c_{i}\delta_{s_{i}}\longmapsto\sum\nolimits _{i}c_{i}K\left(s_{i},\cdot\right)\in\mathscr{H}\left(K\right);\\
\left(T_{K}\mu\right)\left(t\right) & =\int\mu\left(ds\right)K\left(s,\cdot\right)\in\mathscr{H}\left(K\right),\;\mu\in\mathfrak{M}_{2}\left(K\right);\\
\left(T_{K}l\right)\left(t\right) & =l_{\cdot}\left(K\left(\cdot,t\right)\right)\in\mathscr{H}\left(K\right),\;l\in\mathscr{L}_{2}\left(K\right).
\end{align}

In all cases ($l$ as atomic measures, signed measures, or the general
case of linear functionals), the mapping $T_{K}:\mathscr{L}_{2}\left(K\right)\rightarrow\mathscr{H}\left(K\right)$
is an isometry, i.e., $\left\Vert T_{K}l\right\Vert _{\mathscr{H}\left(K\right)}=\left\Vert l\right\Vert _{\mathscr{L}_{2}\left(K\right)}$.
In general, we have 
\begin{equation}
T_{K}\left(\mathfrak{M}_{2}\left(K\right)\right)\subseteq T_{K}\left(\mathscr{L}_{2}\left(K\right)\right)=\mathscr{H}\left(K\right),
\end{equation}
where $T_{K}\left(\mathfrak{M}_{2}\left(K\right)\right)$ is dense
in $\mathscr{H}\left(K\right)$ in the $\left\Vert \cdot\right\Vert _{\mathscr{H}\left(K\right)}$
norm. In some cases, 
\begin{equation}
T_{K}\left(\mathfrak{M}_{2}\left(K\right)\right)=\mathscr{H}\left(K\right);\label{eq:t16}
\end{equation}
but not always. 
\begin{rem}
$\mathfrak{M}_{2}\left(K\right)$ and factorizations on $K$ specified
by a positive measure $M$ on $X$. Let 
\[
K\left(s,t\right)=\int\psi\left(s,x\right)\psi\left(t,x\right)M\left(dx\right),
\]
where $\psi\left(s,\cdot\right)\in L^{2}\left(M\right)$. Let $\mu$
be a signed measure on $S$. Then
\[
\mu\in\mathfrak{M}_{2}\left(K\right)\Longleftrightarrow T_{\mu}\psi\in L^{2}\left(M\right)
\]
where $\left(T_{\mu}\psi\right)\left(x\right)=\int\mu\left(ds\right)\psi\left(s,x\right)$.
Indeed, using Fubini, one has 
\begin{align*}
\iint\mu\left(ds\right)K\left(s,t\right)\mu\left(dt\right) & =\int\left(\int\mu\left(ds\right)\psi\left(s,x\right)\int\mu\left(dt\right)\psi\left(t,x\right)\right)M\left(dx\right)\\
 & =\int\left|\left(T_{\mu}\psi\right)\left(x\right)\right|^{2}M\left(dx\right)\\
 & =\left\Vert T_{K}\left(\mu\right)\right\Vert _{\mathscr{H}\left(K\right)}^{2}.
\end{align*}
\end{rem}

\begin{lem}
If $K$ is p.d. on $S$ and if the induced metric $d_{K}\left(s,t\right)=\left\Vert K_{s}-K_{t}\right\Vert _{\mathscr{H}\left(K\right)}$
is bounded, then $\mathfrak{M}_{2}\left(K\right)$ is complete, and
so $T_{K}\left(\mathfrak{M}_{2}\left(K\right)\right)=\mathscr{H}\left(K\right)$. 
\end{lem}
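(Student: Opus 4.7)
The argument splits into two stages: a general reduction showing that completeness of $\mathfrak{M}_2(K)$ forces $T_K(\mathfrak{M}_2(K))=\mathscr{H}(K)$, and a use of the boundedness hypothesis to prove completeness itself.

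For the reduction stage, I would invoke the earlier lemmas: $T_K:\mathfrak{M}_2(K)\to\mathscr{H}(K)$ is an isometry, and its image already contains $T_K$ applied to finite atomic measures $\sum c_i\delta_{s_i}$, namely the finite linear combinations $\sum c_i K(s_i,\cdot)$, which are dense in $\mathscr{H}(K)$ by the standard RKHS construction. Hence $T_K(\mathfrak{M}_2(K))$ is a dense subspace of $\mathscr{H}(K)$. If $\mathfrak{M}_2(K)$ is Hilbert-complete, its isometric image is closed; being dense and closed, it must equal $\mathscr{H}(K)$. So the stated conclusion $T_K(\mathfrak{M}_2(K))=\mathscr{H}(K)$ follows automatically from completeness.

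For the completeness stage, I would first extract from the hypothesis that $C:=\sup_{s\in S}K(s,s)<\infty$. Indeed, fixing any $s_0\in S$, one has $\|K_s\|_{\mathscr{H}(K)}\le\|K_{s_0}\|_{\mathscr{H}(K)}+d_K(s,s_0)\le\|K_{s_0}\|_{\mathscr{H}(K)}+\sup d_K$, and $K(s,s)=\|K_s\|^2$. This bound in turn gives a continuous embedding $\mathscr{H}(K)\hookrightarrow\ell^{\infty}(S)$, since for every $F\in\mathscr{H}(K)$ and $s\in S$, the reproducing property yields $|F(s)|=|\langle F,K_s\rangle_{\mathscr{H}(K)}|\le C^{1/2}\|F\|_{\mathscr{H}(K)}$. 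Now, given a Cauchy sequence $(\mu_n)\subset\mathfrak{M}_2(K)$, the isometry produces a Cauchy sequence $F_n:=T_K\mu_n$ in $\mathscr{H}(K)$ with limit $F\in\mathscr{H}(K)$, and the embedding above forces $F_n\to F$ uniformly on $S$. My plan is then to define the candidate limit by $\mu(E):=\lim_n\mu_n(E)$ on the generating algebra of $\mathscr{B}_{(S,K)}$, to verify $\sigma$-additivity via a Vitali--Hahn--Saks-type argument using uniform control derived from the boundedness of $K$ on the diagonal, and finally to check that $\mu\in\mathfrak{M}_2(K)$ with $T_K\mu=F$; the latter is straightforward from $(T_K\mu)(t)=\int K(s,t)\,d\mu(s)=\lim F_n(t)=F(t)$ for each $t\in S$.

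The main obstacle is exactly the measure-theoretic step: ensuring that $\mu_n(E)$ converges for every $E\in\mathscr{B}_{(S,K)}$ and that the limit set-function is countably additive, given only $\|\cdot\|_{\mathfrak{M}_2(K)}$-Cauchyness of $(\mu_n)$. Without the boundedness of $d_K$, the abstract Hilbert completion of $\mathfrak{M}_2(K)$ can contain objects that are not signed measures (as Example \ref{exa:Kst1} illustrates with Schwartz distributions in $\mathscr{L}_2(K)\setminus\mathfrak{M}_2(K)$), so the bounded-$d_K$ hypothesis must be doing precisely the work of ruling these out --- supplying the uniform bound on $K(s,s)$ needed to keep the approximating signed measures from escaping the measure-theoretic category.
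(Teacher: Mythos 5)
Your first stage is fine: $T_K$ is isometric, its image contains the span of the functions $K(s_i,\cdot)$ (images of finite atomic measures) and hence is dense, and an isometric image of a complete space is closed, so completeness of $\mathfrak{M}_2(K)$ does force $T_K(\mathfrak{M}_2(K))=\mathscr{H}(K)$. Your observation that boundedness of $d_K$ is equivalent to $C:=\sup_s K(s,s)<\infty$ and yields $\mathscr{H}(K)\hookrightarrow\ell^\infty(S)$ with uniform convergence of $F_n=T_K\mu_n$ is also correct.

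The gap is in the completeness stage. You propose to set $\mu(E):=\lim_n\mu_n(E)$ and then run a Vitali--Hahn--Saks argument, but Vitali--Hahn--Saks takes setwise convergence as a \emph{hypothesis}, and $\|\cdot\|_{\mathfrak{M}_2(K)}$-Cauchyness does not supply it: the quadratic form $\iint\mu(ds)K(s,t)\mu(dt)$ can be far weaker than total variation even when $d_K$ is bounded. For $K(s,t)=s\wedge t$ on $[0,1]$ (where $d_K(s,t)^2=|s-t|\le 1$) one has $\|\mu\|_{\mathfrak{M}_2(K)}^2=\int_0^1\left|\mu([u,1])\right|^2du$, and the measures $\mu_n=\sqrt{n}\left(\delta_{1/2+1/n}-\delta_{1/2}\right)$ all have $\|\mu_n\|_{\mathfrak{M}_2(K)}=1$ while $\mu_n(\{1/2\})=-\sqrt{n}\to-\infty$ and $\|\mu_n\|_{TV}\to\infty$; so the norm gives no pointwise-in-$E$ control, the candidate limit set function need not exist, and nothing guarantees countable additivity or bounded variation. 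Your closing remark that the bounded-$d_K$ hypothesis ``must be doing precisely the work'' of ruling this out acknowledges the missing step rather than supplying it. The paper closes this by a different mechanism: it never passes to setwise limits of the $\mu_n$, but instead completes $(S,d_K)$ as a metric space (elements of $\mathscr{H}(K)$ are bounded and $d_K$-Lipschitz, so they extend) and invokes a Stone--Weierstrass/Riesz-representation argument, with details deferred to \cite{MR3843552}, to show that \emph{every} $\|\cdot\|_{\mathscr{H}(K)}$-continuous linear functional $l$ is of the form $l(G)=\int G\,d\mu$ for a signed measure $\mu$; that is, under the bounded-metric hypothesis $\mathfrak{M}_2(K)$ exhausts $\mathscr{L}_2(K)$, whose completeness and surjectivity under $T_K$ are \lemref{tc} and \lemref{ton}. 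That representation theorem is the actual content of the lemma, and it is the piece absent from your argument.
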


\begin{proof}
Please see the general discussion in section 2 and Lemma 2.1 in \cite{MR3843552}
for details. Sketch: With the metric $d_{K}$, we can complete, and
then use Stone-Weierstrass. Indeed, if $l$ is continuous in the $\left\Vert \cdot\right\Vert _{\mathscr{H}\left(K\right)}$
norm, then there exists a signed measure $\mu$ such that $l\left(G\right)=\int G\left(t\right)\mu\left(dt\right)$. 
\end{proof}
An overview of the relation between $\mathfrak{M}_{2}\left(K\right)$
and $\mathscr{L}_{2}\left(K\right)$. 

Let $K:S\times S\rightarrow\mathbb{R}$ be a given p.d. kernel on
$S$, and consider the RKHS $\mathscr{H}\left(K\right)$, as well
as the isometry $T_{K}$ introduced above. We have:
\[
T_{K}\left(\mathfrak{M}_{2}\left(K\right)\right)\underset{\text{possible \ensuremath{\neq}}}{\subseteq}T_{K}\left(\mathscr{L}_{2}\left(K\right)\right)=\mathscr{H}\left(K\right)
\]
and $T_{K}\left(\mathfrak{M}_{2}\left(K\right)\right)$ is dense in
$\mathscr{H}\left(K\right)$ in the $\left\Vert \cdot\right\Vert _{\mathscr{H}\left(K\right)}$
norm. 

We see that $T_{K}\left(\mathfrak{M}_{2}\left(K\right)\right)$ may
not be closed in $\mathscr{H}\left(K\right)$, and so $\mathfrak{M}_{2}\left(K\right)$
may not be complete. However, the $\mathscr{L}_{2}\left(K\right)$
will be complete since it is a Hilbert space, and $T_{K}\left(\mathscr{L}_{2}\left(K\right)\right)=\mathscr{H}\left(K\right)$.
See \lemref{tc} and \thmref{ton}. 
\begin{lem}
\label{lem:tc}$T_{K}\left(\mathscr{L}_{2}\left(K\right)\right)$
is complete. 
\end{lem}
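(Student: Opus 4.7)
The plan is to reduce this to a direct application of the Riesz representation theorem, since the whole setup of $\mathscr{L}_2(K)$ is engineered precisely so that its elements are exactly the bounded linear functionals on $\mathscr{H}(K)$, and the inner product on $\mathscr{L}_2(K)$ is the one inherited via the Riesz correspondence.

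First I would unpack the definitions: by \defref{L2}, a linear functional $l$ belongs to $\mathscr{L}_2(K)$ iff it is bounded in the $\|\cdot\|_{\mathscr{H}(K)}$ norm, i.e., $l \in \mathscr{H}(K)^{*}$. The Riesz theorem applied to the Hilbert space $\mathscr{H}(K)$ then produces, for each such $l$, a unique representer $F_l = T_K(l) \in \mathscr{H}(K)$ with $l(G) = \langle F_l, G\rangle_{\mathscr{H}(K)}$ for every $G \in \mathscr{H}(K)$, and conversely every $F \in \mathscr{H}(K)$ yields the bounded functional $G \mapsto \langle F, G\rangle_{\mathscr{H}(K)}$ in $\mathscr{L}_2(K)$. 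Hence $T_K$ is a bijection onto $\mathscr{H}(K)$.

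Next I would verify that $T_K$ is an isometry. Taking $G = K(\cdot,t)$ in the identity $l(G) = \langle F_l, G\rangle_{\mathscr{H}(K)}$ gives $F_l(t) = l(K(\cdot,t))$, and therefore
\begin{align*}
\|l\|_{\mathscr{L}_2(K)}^{2} &= l_{s}\,l_{t}\,K(s,t) = l_{s}(F_l(s)) = \langle F_l, F_l\rangle_{\mathscr{H}(K)} = \|T_K l\|_{\mathscr{H}(K)}^{2},
\end{align*}
so $T_K$ is a unitary isomorphism between $(\mathscr{L}_2(K), \langle\cdot,\cdot\rangle_{\mathscr{L}_2})$ and $(\mathscr{H}(K),\langle\cdot,\cdot\rangle_{\mathscr{H}(K)})$.

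Finally, since $\mathscr{H}(K)$ is complete (every RKHS is, by construction), and $T_K(\mathscr{L}_2(K)) = \mathscr{H}(K)$ by surjectivity of the Riesz map, we conclude that $T_K(\mathscr{L}_2(K))$ is complete in the $\|\cdot\|_{\mathscr{H}(K)}$ norm. Equivalently, pulling back along the isometry $T_K$, the space $\mathscr{L}_2(K)$ itself is complete. There is really no hard step here; the only thing to be careful about is the tautological nature of the argument, namely checking that the inner product on $\mathscr{L}_2(K)$ from \defref{L2} (expressed via the double action of $l$ on the kernel $K$) coincides with the pullback of the $\mathscr{H}(K)$ inner product through Riesz, which is exactly the computation displayed above.
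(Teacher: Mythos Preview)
Your argument is correct and rests on the same idea as the paper's: both exploit that, by definition, $\mathscr{L}_2(K)$ is exactly the continuous dual of $\mathscr{H}(K)$, so Riesz identifies it isometrically with $\mathscr{H}(K)$ via $l\mapsto F_l=T_K(l)$. The paper packages this as a Cauchy--sequence argument (a Cauchy sequence $(\mu_n)$ in $\mathscr{L}_2(K)$ maps to a Cauchy sequence $F_n=T_K\mu_n$ in $\mathscr{H}(K)$, whose limit $F$ is then pulled back to $l=\langle F,\cdot\rangle_{\mathscr{H}(K)}\in\mathscr{L}_2(K)$), whereas you go straight to surjectivity $T_K(\mathscr{L}_2(K))=\mathscr{H}(K)$ and read off completeness. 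Your route is slightly more direct and in fact already subsumes the paper's subsequent \lemref{ton} (surjectivity of $T_K$), which the paper proves separately by checking $\ker(T_K^{*})=0$ using Dirac measures; the trade-off is that the paper's sequential argument keeps the two conclusions logically separate, while yours delivers both at once.
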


\begin{proof}
Suppose $\left\{ \mu_{n}\right\} $ is Cauchy in $\mathscr{L}_{2}\left(K\right)$,
$\left\Vert \mu_{n}-\mu_{m}\right\Vert _{\mathscr{L}_{2}\left(K\right)}\rightarrow0$,
then $F_{n}=T_{K}\mu_{n}$ satisfy $\left\Vert F_{n}-F_{m}\right\Vert _{\mathscr{H}\left(K\right)}\rightarrow0$,
and so there exists $F\in\mathscr{H}\left(K\right)$ such that $\left\Vert F_{n}-F\right\Vert _{\mathscr{H}\left(K\right)}\rightarrow0$.
Define $l\left(G\right):=\left\langle F,G\right\rangle _{\mathscr{H}\left(K\right)}$,
for all $G\in\mathscr{H}\left(K\right)$. Then, $l\in\mathscr{L}_{2}\left(K\right)$
and $\left\Vert \mu_{n}-l\right\Vert _{\mathscr{L}_{2}\left(K\right)}\rightarrow0$. 
\end{proof}
\begin{thm}
\label{thm:ton}The isometry $T_{K}$ maps $\mathscr{L}_{2}\left(K\right)$
onto $\mathscr{H}\left(K\right)$.
\end{thm}

\begin{proof}
We shall show that $ker\left(T_{K}^{*}\right)=0$. Suppose $T_{K}^{*}\left(G\right)=0$.
Then for all $\mu$, 
\[
0=\left\langle \mu,T_{K}^{*}G\right\rangle _{\mathfrak{M}_{2}\left(K\right)}=\left\langle T_{K}\mu,G\right\rangle _{\mathscr{H}\left(K\right)}.
\]
Now take $\mu=\delta_{t}$, $t\in S$; then 
\[
0=\left\langle T_{K}\left(\delta_{t}\right),G\right\rangle _{\mathscr{H}\left(K\right)}=\left\langle K\left(\cdot,t\right),G\left(\cdot\right)\right\rangle _{\mathscr{H}\left(K\right)}=G\left(t\right).
\]
Thus, $G\left(t\right)=0$ for all $t\in S$, and so $G=0$ in $\mathscr{H}\left(K\right)$.

In \ref{sec:RKHS} above, in the framework of (general) stochastic
analysis, we presented a natural isometric transform. A key part of
this construction entails an identification of the \textquotedblleft right\textquotedblright{}
Hilbert spaces for the purpose. In the next section below, we shall
now apply this to a general class of stationary Gaussian processes.
Moreover, in this stochastic framework, we then arrive at anew and
explicit transform which serves as an infinite-dimensional stochastic
analysis-Fourier transform; see Theorem \ref{thm:For-the-Gaussian}
and Proposition \ref{prop:TXs}.
\end{proof}

\section{\label{sec:infinite}An infinite-dimensional Fourier transform for
Gaussian processes via kernel analysis}

Our last section below deals with a number of applied transform-results.
In more detail, they are infinite-dimensional and stochastic transforms.
They are motivated to a large extent by our results (see \cite{jorgensen2019dimension})
on dynamical PCA (DPCA). Moreover, our results below will combine
two main themes in our paper, i.e., combining (i) adaptive kernel
tricks (positive definite kernels, their Hilbert spaces, and choices
of feature spaces) with (ii) a new analysis of corresponding classes
of Gaussian processes/fields. This approach also builds on our parallel
results on Monte Carlo simulation and use of Karhunen-Loève analysis.
These new transforms, and their applications (see especially \corref{hl}
and the subsequent results in sect. \ref{sec:infinite}), are motivated
directly by DPCA analysis, and they apply directly to design of DPCA
algorithms.

In this section we discuss two of the positive definite kernels $K$
used in \cite{jorgensen2019dimension}. 

We show that each kernel $K$ is associated with a certain transform
for its reproducing kernel Hilbert space (RKHS) $\mathcal{H}\left(K\right)$.
The transform is studied in detail; -- it may be viewed as an infinite-dimensional
Fourier transform, see \defref{gf}. In detail, this transform $\mathcal{T}$
is defined on an $L^{2}$ path-space $L^{2}\left(\Omega,\mathbb{P}\right)$
of Brownian motion; and $\mathcal{T}$ is shown to be an isometric
isomorphism of $L^{2}\left(\Omega,\mathbb{P}\right)$ onto the RKHS
$\mathcal{H}\left(K\right)$; see \corref{hl}. For earlier results
dealing with the use of RKHSs in PCA, and related areas, see e.g.,
\cite{MR2239907,MR3097610,MR3843387}.\\

Consider the following positive definite (p.d.) kernel on $\mathbb{R}\times\mathbb{R}$;
\begin{equation}
K\left(s,t\right):=e^{-\frac{1}{2}\left|s-t\right|},\;s,t\in\mathbb{R}.\label{eq:e1}
\end{equation}
In order to understand its PCA properties, we consider the top part
of the spectrum in sampled versions of (\ref{eq:e1}). We show below
that $K$ is the covariance kernel of the complex process $\left\{ e^{iX_{t}}\right\} _{t\in\mathbb{R}}$
where $\left\{ X_{t}\right\} _{t\in\mathbb{R}}$ is the standard Gaussian
process.

Let $X_{t}$ be the standard Brownian motion indexed by $t\in\mathbb{R}$;
i.e., $X_{0}=0$, $X_{t}~N(0,t)$, where$X_{t}$ is realized on a
probability space $\left(\Omega,\mathscr{A},\mathbb{P}\right)$, such
that, for all $s,t\in\mathbb{R}$, 
\begin{equation}
\mathbb{E}\left(X_{s}X_{t}\right)=\begin{cases}
\left|s\right|\wedge\left|t\right| & \text{if \ensuremath{s} and \ensuremath{t} have the same sign}\\
0 & \text{if \ensuremath{st\leq0}}.
\end{cases}\label{eq:e2}
\end{equation}
Here $\mathbb{E}\left(\cdot\right)$ denotes the expectation, 
\begin{equation}
\mathbb{E}\left(\cdots\right)=\int_{\Omega}\left(\cdots\right)d\mathbb{P}.
\end{equation}

\begin{rem}
The process $\left\{ X_{t}\right\} _{t\in\mathbb{R}}$ can be realized
in many different but equivalent ways. 
\end{rem}

Note that 
\begin{equation}
\mathbb{E}\left(\left|X_{s}-X_{t}\right|^{2}\right)=\left|s-t\right|,
\end{equation}
so the process $X_{t}$ has stationary and independent increments.
In particular, if $s,t>0$, then $\left|s-t\right|=s+t-2\,s\wedge t$,
and 
\begin{equation}
s\wedge t=\frac{s+t-\left|s-t\right|}{2}.
\end{equation}

\begin{prop}
The kernel $K$ from (\ref{eq:e1}) is positive definite. 
\end{prop}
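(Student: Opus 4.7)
The plan is to exhibit $K$ as the covariance kernel of the complex-valued, centered (well, the centered version in the sense of the hermitian covariance) stochastic process $W_t := e^{iX_t}$, and then invoke the easy direction of Kolmogorov's theorem (\thmref{kol}) to conclude positive definiteness. The paper has explicitly telegraphed this route in the paragraph preceding the proposition, so the task is to supply the computational verification.

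First I would compute the hermitian covariance
\[
\mathbb{E}\!\left[\overline{W_s}\,W_t\right]=\mathbb{E}\!\left[e^{i(X_t-X_s)}\right].
\]
Next, I would invoke the stationary increments property recorded in the display just above the statement: $X_t-X_s$ is a centered Gaussian with variance $|t-s|$. Using the standard formula for the characteristic function of a centered Gaussian, $\mathbb{E}\!\left[e^{i\xi Z}\right]=e^{-\sigma^{2}\xi^{2}/2}$ for $Z\sim N(0,\sigma^{2})$, evaluated at $\xi=1$ with $\sigma^{2}=|t-s|$, I obtain
\[
\mathbb{E}\!\left[e^{i(X_t-X_s)}\right]=e^{-\frac{1}{2}|t-s|}=K(s,t).
\]

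Finally I would close by applying the easy half of Kolmogorov's theorem in the Hermitian (complex) form: for any finite selections $\{c_i\}\subset\mathbb{C}$ and $\{s_i\}\subset\mathbb{R}$,
\[
\sum_{i,j}\overline{c_i}\,c_j\,K(s_i,s_j)=\mathbb{E}\!\left[\Bigl|\sum_i c_i W_{s_i}\Bigr|^{2}\right]\geq 0,
\]
which is exactly positive definiteness of $K$.

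There is no real obstacle here: everything hinges on the variance identity $\mathbb{E}[(X_t-X_s)^{2}]=|t-s|$ and the Gaussian characteristic-function formula, both of which are standard and compatible with the process's definition given in (\ref{eq:e2}). The only point requiring mild care is that the process $W_t=e^{iX_t}$ is complex-valued, so one must use the Hermitian version of the positive-definiteness test (with conjugates) as in the proof of \thmref{kol}; an alternative route via Bochner's theorem (recognizing $e^{-|x|/2}$ as the Fourier transform of a Cauchy density) is available but less aligned with the paper's stochastic-process viewpoint.
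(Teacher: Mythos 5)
Your proof is correct and follows essentially the same route as the paper: realize $K(s,t)=\mathbb{E}\bigl(e^{iX_s}e^{-iX_t}\bigr)$ as the covariance of the complex process $e^{iX_t}$ and conclude positive definiteness from $\sum_{j,k}\overline{c_j}c_k K(t_j,t_k)=\mathbb{E}\bigl(\bigl|\sum_j c_j e^{iX_{t_j}}\bigr|^2\bigr)\geq 0$. The only (cosmetic) difference is that you quote the Gaussian characteristic-function formula for the increment $X_t-X_s\sim N(0,|t-s|)$, whereas the paper re-derives that identity from the moment formulas $\mathbb{E}\bigl(|X_t-X_s|^{2n}\bigr)=(2n-1)!!\,|t-s|^n$ via the power series of the exponential.
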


\begin{proof}
Let $\left\{ X_{t}\right\} _{t\in\mathbb{R}}$ be the standard Brownian
motion and let $e^{iX_{t}}$ be the corresponding complex process,
then by direct calculation, 
\begin{equation}
e^{-\frac{1}{2}\left|t\right|}=\mathbb{E}\left(e^{iX_{t}}\right),\;\forall t\in\mathbb{R};\label{eq:e6}
\end{equation}
and 
\begin{eqnarray}
e^{-\frac{1}{2}\left|s-t\right|} & = & \mathbb{E}\left(e^{iX_{s}}e^{-iX_{t}}\right)\nonumber \\
 & = & \left\langle e^{iX_{s}},e^{iX_{t}}\right\rangle _{L^{2}\left(\mathbb{P}\right)},\;\forall s,t\in\mathbb{R}.\label{eq:e7}
\end{eqnarray}

The derivation of (\ref{eq:e6})-(\ref{eq:e7}) is based on power
series expansion of $e^{iX_{t}}$, and the fact that 
\begin{align}
\mathbb{E}\left(X_{t}^{2n}\right) & =\left(2n-1\right)!!\left|t\right|^{n},\;\text{and}\label{eq:e8}\\
\mathbb{E}\left(\left|X_{t}-X_{s}\right|^{2n}\right) & =\left(2n-1\right)!!\left|t-s\right|^{n},\label{eq:e9}
\end{align}
where 
\begin{equation}
\left(2n-1\right)!!=\left(2n-1\right)\left(2n-3\right)\cdots5\cdot3=\frac{\left(2n\right)!}{2^{n}n!}.\label{eq:e10}
\end{equation}

Now the positive definite property of $K$ follows from (\ref{eq:e7}),
since the RHS of (\ref{eq:e7}) is p.d. In details: for all $\left(c_{j}\right)_{j=1}^{N}$,
$c_{j}\in\mathbb{R}$: 
\begin{align*}
\sum\nolimits _{j}\sum\nolimits _{k}c_{j}c_{k}\,e^{-\frac{1}{2}\left|t_{j}-t_{k}\right|} & =\sum\nolimits _{j}\sum\nolimits _{k}c_{j}c_{k}\mathbb{E}\left(e^{iX_{t_{j}}}e^{-iX_{t_{k}}}\right)\\
 & =\mathbb{E}\left(\left|\sum\nolimits _{j}c_{j}e^{iX_{t_{j}}}\right|^{2}\right)\geq0.
\end{align*}
\end{proof}

\subsection{An Infinite-dimensional Fourier transform}
\begin{defn}
\label{def:gf} Let $\mathcal{H}\left(K\right)$ be the RKHS from
the kernel $K$ in (\ref{eq:e1}); and define the following transform
$\mathcal{T}:L^{2}\left(\mathbb{P}\right)\longrightarrow\mathcal{H}\left(K\right)$,
\begin{align}
\mathcal{T}\left(F\right)\left(t\right) & :=\mathbb{E}\left(e^{-iX_{t}}F\right)\nonumber \\
 & =\int_{\Omega}e^{-iX_{t}\left(\omega\right)}F\left(\omega\right)d\mathbb{P}\left(\omega\right)\label{eq:e11}
\end{align}
for all $F\in L^{2}\left(\mathbb{P}\right)$. Here, $\left\{ X_{t}\right\} $
is the Brownian motion. 

Our new Fourier transforms have properties in common with its classical
counterparts. It is isometric between the respective Hilbert spaces
but the Hilbert spaces are different in the infinite dimensional framework. 
\end{defn}

It is known that the standard Brownian motion, indexed by $\mathbb{R}$,
has a continuous realization (see e.g., \cite{MR562914}, page 46,
Theorem 2.1.) Hence the transform $\mathcal{T}$ defined by (\ref{eq:e11})
maps $L^{2}\left(\Omega,\mathbb{P}\right)$ into the bounded continuous
functions on $\mathbb{R}$. \corref{hl}, below, is the stronger assertion
that $\mathcal{T}$ maps $L^{2}\left(\Omega,\mathbb{P}\right)$ isometrically
onto the RKHS $\mathcal{H}\left(K\right)$ where $K$ is the kernel
in (\ref{eq:e1}).

Set 
\begin{equation}
K_{t}\left(\cdot\right)=e^{-\frac{1}{2}\left|t-\cdot\right|}\in\mathcal{H}\left(K\right),
\end{equation}
then by the reproducing property in $\mathcal{H}\left(K\right)$,
we have 
\begin{equation}
\left\langle K_{t},\psi\right\rangle _{\mathcal{H}\left(K\right)}=\psi\left(t\right),\;\forall\psi\in\mathcal{H}\left(K\right).
\end{equation}

\begin{lem}
Let $\mathcal{T}$ be the generalized Fourier transform in (\ref{eq:e11}),
and $\mathcal{T}^{*}$ be the adjoint operator; see the diagram below.
\begin{equation}
\xymatrix{L^{2}\left(\mathbb{P}\right)\ar@/^{1pc}/[r]^{\mathcal{T}} & \mathcal{H}\left(K\right)\ar@/^{1pc}/[l]^{\mathcal{T}^{*}}}
\label{eq:e14}
\end{equation}
Then, we have 
\begin{align}
\mathcal{T}\left(e^{iX_{t}}\right) & =K_{t},\;\text{and}\label{eq:e15}\\
\mathcal{T}^{*}\left(K_{t}\right) & =e^{iX_{t}}.\label{eq:e16}
\end{align}
\end{lem}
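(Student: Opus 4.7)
The plan is to verify the two identities by direct computation: the first is essentially the defining relation of $\mathcal{T}$ combined with the identity (\ref{eq:e7}), and the second is the Riesz duality that results from the reproducing property of $K_t$ in $\mathcal{H}(K)$.

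First I would establish (\ref{eq:e15}). Fix $t \in \mathbb{R}$. Since $\left|e^{iX_t}\right| \equiv 1$ we have $e^{iX_t} \in L^2(\Omega,\mathbb{P})$, so $\mathcal{T}(e^{iX_t})$ is well-defined. For each $s \in \mathbb{R}$,
\begin{equation*}
\mathcal{T}(e^{iX_t})(s) \;=\; \mathbb{E}\!\left(e^{-iX_s} e^{iX_t}\right) \;=\; e^{-\frac{1}{2}|s-t|} \;=\; K_t(s),
\end{equation*}
where the first equality is \defref{gf} and the second is exactly (\ref{eq:e7}). Thus the function $\mathcal{T}(e^{iX_t})$ agrees pointwise with $K_t$, giving $\mathcal{T}(e^{iX_t}) = K_t$ in $\mathcal{H}(K)$.

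Next I would derive (\ref{eq:e16}) from the defining property of the adjoint, $\langle \mathcal{T} F, G\rangle_{\mathcal{H}(K)} = \langle F, \mathcal{T}^* G\rangle_{L^2(\mathbb{P})}$, together with the reproducing property $\langle \psi, K_t\rangle_{\mathcal{H}(K)} = \psi(t)$. Setting $G = K_t$, then for every $F \in L^2(\mathbb{P})$,
\begin{align*}
\langle F, \mathcal{T}^* K_t\rangle_{L^2(\mathbb{P})}
  &= \langle \mathcal{T} F, K_t\rangle_{\mathcal{H}(K)} = \mathcal{T}(F)(t) \\
  &= \mathbb{E}\!\left(e^{-iX_t} F\right) = \langle F, e^{iX_t}\rangle_{L^2(\mathbb{P})}.
\end{align*}
Since $F \in L^2(\mathbb{P})$ is arbitrary, this forces $\mathcal{T}^* K_t = e^{iX_t}$.

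There is no substantive obstacle here; the only issue to keep straight is the conjugation convention for the complex inner products on $L^2(\mathbb{P})$ and $\mathcal{H}(K)$, but since $K$ is real-valued this is a cosmetic matter. Note that for the adjoint to be legitimately invoked one should know in advance that $\mathcal{T}$ is bounded; this will follow from the isometry statement in \corref{hl}, or alternatively it can be extracted directly from (\ref{eq:e15}) together with the denseness of $\mathrm{span}\{e^{iX_t}\}$ in $L^2(\mathbb{P})$.
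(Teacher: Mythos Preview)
Your proof is correct and follows essentially the same route as the paper: a direct computation using (\ref{eq:e7}) for (\ref{eq:e15}), and the reproducing property together with the adjoint relation for (\ref{eq:e16}). Your additional remarks about the conjugation convention and the boundedness of $\mathcal{T}$ are reasonable caveats but do not change the argument.
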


\begin{proof}
Recall the definition $\mathcal{T}\left(F\right)\left(s\right):=\mathbb{E}\left(e^{-iX_{s}}F\right)$.

\emph{Proof of }(\ref{eq:e15}). Setting $F=e^{iX_{t}}$, then 
\begin{align*}
\mathcal{T}\left(e^{iX_{t}}\right)\left(s\right) & =\mathbb{E}\left(e^{-iX_{s}}e^{iX_{t}}\right)=\mathbb{E}\left(e^{i\left(X_{t}-X_{s}\right)}\right)\\
 & =e^{-\frac{1}{2}\left|t-s\right|}=K_{t}\left(s\right).
\end{align*}

\emph{Proof of }(\ref{eq:e16})\emph{.} Let $F\in L^{2}\left(\mathbb{P}\right)$,
then 
\begin{align*}
\left\langle \mathcal{T}^{*}\left(K_{t}\right),F\right\rangle _{L^{2}\left(\mathbb{P}\right)} & =\left\langle K_{t},\mathcal{T}\left(F\right)\right\rangle _{\mathcal{H}\left(K\right)}=\mathcal{T}\left(F\right)\left(t\right)\\
 & =\mathbb{E}\left(e^{-iX_{t}}F\right)=\left\langle e^{iX_{t}},F\right\rangle _{L^{2}\left(\mathbb{P}\right)}.
\end{align*}
\end{proof}
\begin{cor}
\label{cor:hl}The generalized Fourier transform in (\ref{eq:e14})
is an isometric isomorphism from $L^{2}\left(\mathbb{P}\right)$ onto
$\mathcal{H}\left(K\right)$, with $\mathcal{T}\left(e^{iX_{t}}\right)=K_{t}$,
see (\ref{eq:e15}). Here, $\mathcal{H}\left(K\right)$ is a Hilbert
space that allows generalized spectral decompositions. 
\end{cor}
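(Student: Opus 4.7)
The plan is to bootstrap from the two identities already established, namely $\mathcal{T}(e^{iX_t})=K_t$ and $\mathcal{T}^{*}(K_t)=e^{iX_t}$, and propagate them via linearity, continuity, and density. Concretely: step (a) is to establish that $\mathcal{T}$ is a bounded operator (e.g.\ directly from (\ref{eq:e11}) by Cauchy--Schwarz, since $|\mathcal{T}(F)(t)|\le \|F\|_{L^{2}(\mathbb{P})}$); step (b) is to verify the isometry on the linear span $\mathcal{E}:=\mathrm{span}\{e^{iX_t}:t\in\mathbb{R}\}\subset L^{2}(\mathbb{P})$; step (c) is to extend by continuity to $\overline{\mathcal{E}}$; and step (d) is to argue that $\overline{\mathcal{E}}=L^{2}(\mathbb{P})$ and that the image is all of $\mathcal{H}(K)$.

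For step (b), take a finite combination $F=\sum_{j}c_{j}e^{iX_{t_{j}}}\in\mathcal{E}$ and compute both sides using (\ref{eq:e7}). On the probability side,
\[
\|F\|_{L^{2}(\mathbb{P})}^{2}=\sum\nolimits_{j,k}\overline{c_{j}}c_{k}\,\mathbb{E}\!\left(e^{-iX_{t_{j}}}e^{iX_{t_{k}}}\right)=\sum\nolimits_{j,k}\overline{c_{j}}c_{k}\,K(t_{j},t_{k}).
\]
On the RKHS side, by linearity and (\ref{eq:e15}), $\mathcal{T}(F)=\sum_{j}c_{j}K_{t_{j}}$, and the reproducing property gives $\|\mathcal{T}(F)\|_{\mathcal{H}(K)}^{2}=\sum_{j,k}\overline{c_{j}}c_{k}K(t_{j},t_{k})$. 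Thus $\mathcal{T}$ is isometric on $\mathcal{E}$, and by boundedness it extends uniquely to an isometry on $\overline{\mathcal{E}}$. Surjectivity is cheap: the image contains $\{K_{t}\}_{t\in\mathbb{R}}$ by (\ref{eq:e15}), and $\mathrm{span}\{K_{t}\}$ is dense in $\mathcal{H}(K)$ by the very definition of an RKHS; combined with the closed range coming from isometry, $\mathcal{T}(\overline{\mathcal{E}})=\mathcal{H}(K)$.

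The only nontrivial step is (d), showing that no piece of $L^{2}(\mathbb{P})$ lies outside $\overline{\mathcal{E}}$, equivalently that $\ker(\mathcal{T})=\{0\}$. The cleanest route is to use the adjoint identity (\ref{eq:e16}) together with the already-proved surjectivity: since $\mathcal{T}$ has dense range and the composition $\mathcal{T}\mathcal{T}^{*}$ fixes every $K_{t}$ (because $\mathcal{T}\mathcal{T}^{*}K_{t}=\mathcal{T}(e^{iX_{t}})=K_{t}$), one gets $\mathcal{T}\mathcal{T}^{*}=I_{\mathcal{H}(K)}$ by density. This makes $\mathcal{T}^{*}$ an isometric embedding whose range equals $\overline{\mathcal{E}}$; the desired conclusion then amounts to saying that $L^{2}(\Omega,\mathbb{P})$ is generated, as a Hilbert space, by the characters $\{e^{iX_{t}}\}_{t\in\mathbb{R}}$ of the Brownian process, which is the standard totality statement for exponentials of a Gaussian field (and is the main obstacle / the point requiring care). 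Granting this totality, $\mathcal{T}^{*}$ is an isometric bijection, hence so is its inverse $\mathcal{T}$, which is the assertion of the corollary.
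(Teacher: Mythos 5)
Your argument follows essentially the same route as the paper's own proof, which likewise deduces the corollary directly from (\ref{eq:e15})--(\ref{eq:e16}) together with the density of $\mathrm{span}\{e^{iX_{t}}\}$ in $L^{2}(\mathbb{P})$ and of $\mathrm{span}\{K_{t}\}$ in $\mathcal{H}(K)$; your steps (a)--(c) simply fill in the routine isometry-on-the-span computation. The totality of the single exponentials $\{e^{iX_{t}}\}_{t\in\mathbb{R}}$, which you rightly isolate as the one point requiring care (and which is stronger than the standard totality of $\{e^{i\sum_{j}\lambda_{j}X_{t_{j}}}\}$), is asserted without proof in the paper as well, so on this point your proposal and the paper stand on the same footing.
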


\begin{proof}
This is a direct application of (\ref{eq:e15})-(\ref{eq:e16}). Also
note that $\text{span}\left\{ e^{iX_{t}}\right\} $ is dense in $L^{2}\left(\mathbb{P}\right)$,
and $\text{span}\left\{ K_{t}\right\} $ is dense in $\mathcal{H}\left(K\right)$.
For the density of $\text{span}\left\{ e^{iX_{t}}\right\} $, see
e.g., \cite{MR1474726}, Lemma 2.7.
\end{proof}
\begin{conclusion}
$\mathcal{H}\left(K\right)$ is naturally isometrically isomorphic
to $L^{2}\left(\mathbb{P}\right)$. 
\end{conclusion}

\begin{rem}
To understand this isometric isomorphism $L^{2}\left(\mathbb{P}\right)\xrightarrow{\;\simeq\;}\mathcal{H}\left(K\right)$,
we must treat $L^{2}\left(\mathbb{P}\right)$ as a \emph{complex}
Hilbert space, while $\mathcal{H}\left(K\right)$ is defined as a
real Hilbert space; i.e., the generating functions $e^{iX_{t}}\in L^{2}\left(\mathbb{P}\right)$
are complex, where the inner product in $L^{2}\left(\mathbb{P}\right)$
is $\left\langle u,v\right\rangle _{L^{2}\left(\mathbb{P}\right)}=\int_{\Omega}\overline{u}vd\mathbb{P}$;
but the functions $K_{t}$, $t\in\mathbb{R}$, in $\mathcal{H}\left(K\right)$
are real valued. 
\end{rem}

\begin{rem}
Assume the normalization $X_{0}=0$, and $0<s<t$. The two processes
$X_{t-s}$, and $X_{t}-X_{s}$ are different, but they have the same
distribution $N\left(0,t-s\right)$. Indeed, we have 
\begin{eqnarray*}
\mathbb{E}\left(e^{iX_{t}}e^{-iX_{s}}\right) & = & \mathbb{E}\left(e^{i\left(X_{t}-X_{s}\right)}\right)\\
 & = & \sum_{n=0}^{\infty}\frac{i^{n}}{n!}\mathbb{E}\left(\left(X_{t}-X_{s}\right)^{n}\right)\\
 & = & \sum_{n=0}^{\infty}\frac{\left(-1\right)^{n}}{\left(2n\right)!}\mathbb{E}\left(\left(X_{t}-X_{s}\right)^{2n}\right)\quad(\text{since the odd terms cancel})\\
 & \underset{\text{by }\left(\ref{eq:e8}\right)}{=} & \sum_{n=0}^{\infty}\frac{\left(-1\right)^{n}}{\left(2n\right)!}\left(2n-1\right)!!\left(t-s\right)^{n}\\
 & \underset{\text{by }\left(\ref{eq:e9}\right)}{=} & \sum_{n=0}^{\infty}\frac{\left(-1\right)^{n}}{2^{n}n!}\left(t-s\right)^{n}\\
 & \underset{\text{by }\left(\ref{eq:e10}\right)}{=} & \sum_{n=0}^{\infty}\frac{1}{n!}\left(-\frac{1}{2}\left(t-s\right)\right)^{n}\\
 & = & e^{-\frac{1}{2}\left(t-s\right)}=\mathbb{E}\left(e^{iX_{t-s}}\right).
\end{eqnarray*}
\end{rem}

\begin{thm}
\label{thm:For-the-Gaussian}For the Gaussian kernel $K_{Gauss}\left(x,y\right)=e^{\frac{1}{2t}\left(x-y\right)^{2}}$,
we have 
\begin{align}
e^{-x^{2}/2t} & =\mathbb{E}\left(e^{ixX_{1/t}}\right),\;\text{and}\label{eq:e17}\\
e^{-\left(x-y\right)^{2}/2t} & =\mathbb{E}\left(e^{ixX_{1/t}}e^{-iyX_{1/t}}\right)=\left\langle e^{ixX_{1/t}},e^{iyX_{1/t}}\right\rangle _{L^{2}\left(\mathbb{P}\right)}.\label{eq:e18}
\end{align}
\end{thm}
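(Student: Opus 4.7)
The plan is to recognize that both identities are instances of the Gaussian characteristic function formula, specialized to the random variable $X_{1/t}$, which is distributed as $N(0,1/t)$ by the defining covariance \eqref{eq:e2}. The whole theorem is essentially a rescaled restatement of the computation carried out earlier for the kernel \eqref{eq:e1} (cf.\ \eqref{eq:e6}--\eqref{eq:e7}), with the time parameter $t$ replaced by $1/t$ and a deterministic scalar $x$ inserted into the exponent.

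First I would establish \eqref{eq:e17}. Since we want $\mathbb{E}(e^{ixX_{1/t}})$, expand the exponential as a power series, take the expectation termwise (justified by dominated convergence, since $|e^{ixX_{1/t}}|=1$), and note that by symmetry of $N(0,1/t)$ all odd moments vanish:
\[
\mathbb{E}\bigl(e^{ixX_{1/t}}\bigr)=\sum_{n=0}^{\infty}\frac{(ix)^{2n}}{(2n)!}\mathbb{E}\bigl(X_{1/t}^{2n}\bigr).
\]
Applying the moment identity \eqref{eq:e8} in the form $\mathbb{E}(X_{1/t}^{2n})=(2n-1)!!\,(1/t)^{n}$ and the combinatorial simplification \eqref{eq:e10}, $(2n-1)!!/(2n)!=1/(2^{n}n!)$, the series collapses to
\[
\sum_{n=0}^{\infty}\frac{1}{n!}\Bigl(-\frac{x^{2}}{2t}\Bigr)^{n}=e^{-x^{2}/(2t)},
\]
which is exactly \eqref{eq:e17}.

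For \eqref{eq:e18}, I would combine the exponentials pointwise on $\Omega$, which is legal since $X_{1/t}$ is a single (real-valued) random variable, so
\[
e^{ixX_{1/t}}\,e^{-iyX_{1/t}}=e^{i(x-y)X_{1/t}}.
\]
Taking expectation and applying \eqref{eq:e17} with $x$ replaced by $x-y$ yields $e^{-(x-y)^{2}/(2t)}$, which is the first equality of \eqref{eq:e18}. The second equality is just the definition of the $L^{2}(\mathbb{P})$ inner product, together with $\overline{e^{iyX_{1/t}}}=e^{-iyX_{1/t}}$ since $X_{1/t}$ is real-valued.

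There is no real obstacle here; the only point that deserves care is keeping the substitution $t\mapsto 1/t$ straight (so that $N(0,t)$ becomes $N(0,1/t)$ and the characteristic function evaluates to $e^{-x^{2}/(2t)}$ rather than $e^{-tx^{2}/2}$), and noting that positive definiteness of $K_{Gauss}$ is then automatic, since \eqref{eq:e18} presents it as a Gram matrix in $L^{2}(\mathbb{P})$, in exact parallel with \eqref{eq:e7}.
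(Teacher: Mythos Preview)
Your proposal is correct and follows essentially the same approach as the paper: expand $\mathbb{E}(e^{ixX_{1/t}})$ as a power series, invoke the even-moment formula \eqref{eq:e8} with time $1/t$, simplify via \eqref{eq:e10}, and then deduce \eqref{eq:e18} by replacing $x$ with $x-y$. Your write-up is in fact slightly more explicit than the paper's about why \eqref{eq:e18} follows (combining the exponentials pointwise before taking the expectation), but the underlying argument is identical.
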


\begin{proof}
A direct calculation yields 
\begin{align*}
\mathbb{E}\left(e^{ixX_{1/t}}\right) & =\sum_{n=0}^{\infty}\frac{\left(ix\right)^{n}}{n!}\mathbb{E}\left(X_{1/t}^{n}\right)\\
 & =\sum_{n=0}^{\infty}\frac{\left(-1\right)^{n}x^{2n}}{\left(2n\right)!}\left(2n-1\right)!!\frac{1}{t^{n}}\\
 & =\sum_{n=0}^{\infty}\frac{\left(-1\right)^{n}}{2^{n}n!}\frac{x^{2n}}{t^{n}}=\sum_{n=0}^{\infty}\frac{1}{n!}\left(-\frac{x^{2}}{2t}\right)^{n}=e^{-x^{2}/2t},
\end{align*}
which is (\ref{eq:e17}); and (\ref{eq:e18}) follows from this. 
\end{proof}
\begin{lem}
\label{lem:bs}Assume $0<s<t$. If $F\in L^{2}(\Omega,B_{s},P)$,
where $\mathscr{B}_{s}=$ $\sigma$-algebra generated by $\left\{ X_{u}\mathrel{;}u\leq s\right\} $,
then 
\begin{equation}
\mathcal{T}\left(F\right)\left(t\right)=e^{-\frac{t-s}{2}}\mathcal{T}\left(F\right)\left(s\right).\label{eq:bs1}
\end{equation}
\end{lem}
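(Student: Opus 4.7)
The plan is to reduce \eqref{eq:bs1} to the independent-increments structure of Brownian motion, combined with the one-dimensional characteristic function identity \eqref{eq:e6}. Unfolding the definition \eqref{eq:e11}, what must be shown is
\[
\mathbb{E}\!\left(e^{-iX_{t}}F\right)=e^{-(t-s)/2}\,\mathbb{E}\!\left(e^{-iX_{s}}F\right).
\]

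First I would write $e^{-iX_{t}}=e^{-iX_{s}}\,e^{-i(X_{t}-X_{s})}$ and note that, for $0<s<t$, the increment $X_{t}-X_{s}$ is independent of the $\sigma$-algebra $\mathscr{B}_{s}$. Since both $F$ and $e^{-iX_{s}}$ are $\mathscr{B}_{s}$-measurable (and $F\in L^{2}(\mathbb{P})$, with $|e^{-iX_{s}}|=1$, making the product integrable), I can factor the expectation:
\[
\mathbb{E}\!\left(e^{-iX_{t}}F\right)=\mathbb{E}\!\left(e^{-iX_{s}}F\cdot e^{-i(X_{t}-X_{s})}\right)=\mathbb{E}\!\left(e^{-iX_{s}}F\right)\cdot\mathbb{E}\!\left(e^{-i(X_{t}-X_{s})}\right).
\]

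Next I would evaluate the scalar factor $\mathbb{E}(e^{-i(X_{t}-X_{s})})$. Since $X_{t}-X_{s}$ has the same distribution as $X_{t-s}$, namely $N(0,t-s)$, the identity \eqref{eq:e6} (equivalently the computation in \thmref{For-the-Gaussian}) gives $\mathbb{E}(e^{-i(X_{t}-X_{s})})=e^{-(t-s)/2}$. Substituting back and recognizing $\mathbb{E}(e^{-iX_{s}}F)=\mathcal{T}(F)(s)$ yields \eqref{eq:bs1}.

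There is no substantive obstacle: the only subtlety is justifying the factorization, which amounts to checking that $e^{-iX_{s}}F\in L^{2}(\mathscr{B}_{s})$ and that $X_{t}-X_{s}$ is independent of $\mathscr{B}_{s}$ — both immediate from boundedness of $|e^{-iX_{s}}|$ and from the independent-increments property of Brownian motion. Thus the lemma expresses a concrete semigroup/Markov property for the transform $\mathcal{T}$: on $\mathscr{B}_{s}$-measurable vectors, evaluation of $\mathcal{T}(F)$ at $t>s$ decays exponentially from its value at $s$ with rate $1/2$.
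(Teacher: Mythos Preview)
Your argument is correct and matches the paper's proof essentially line for line: decompose $e^{-iX_t}=e^{-iX_s}e^{-i(X_t-X_s)}$, invoke independence of the increment $X_t-X_s$ from $\mathscr{B}_s$ to factor the expectation, and evaluate $\mathbb{E}\big(e^{-i(X_t-X_s)}\big)=e^{-(t-s)/2}$. The only difference is that you spell out the integrability justification more explicitly than the paper does.
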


\begin{proof}
~ 
\begin{align*}
\text{LHS}_{\left(\ref{eq:bs1}\right)} & =\mathbb{E}\left(e^{-iX_{t}}F\right)\\
 & =\mathbb{E}\left(e^{-i\left(X_{t}-X_{s}\right)}e^{-iX_{s}}F\right)\\
\text{the independence of increament} & =\mathbb{E}\left(e^{-i\left(X_{t}-X_{s}\right)}\right)\mathbb{E}\left(e^{-iX_{s}}F\right)\\
 & =e^{-\frac{t-s}{2}}\mathcal{T}\left(F\right)\left(s\right)=\text{RHS}_{\left(\ref{eq:bs1}\right)}.
\end{align*}
\end{proof}
\begin{prop}
\label{prop:TXs}Let $0<s<t$, and let $H_{n}\left(\cdot\right)$,
$n\in\mathbb{N}_{0}$, be the Hermite polynomials; then 
\begin{equation}
\mathcal{T}\left(X_{s}^{n}\right)\left(t\right)=i^{n}e^{-\frac{t}{2}}s^{\frac{n}{2}}H_{n}\left(\sqrt{s}\right).\label{eq:bs2}
\end{equation}
\end{prop}

\begin{proof}
By \lemref{bs}, we have 
\begin{align*}
\mathcal{T}\left(X_{s}^{n}\right)\left(t\right) & =e^{-\frac{t-s}{2}}\mathcal{T}\left(X_{s}^{n}\right)\left(s\right)\\
 & =e^{-\frac{t-s}{2}}\mathbb{E}\left(e^{-iX_{s}}X_{s}^{n}\right)\\
 & =e^{-\frac{t-s}{2}}i^{n}\left(\frac{d}{d\lambda}\right)^{n}\big|_{\lambda=1}\mathbb{E}\left(e^{-i\lambda X_{s}}\right)\\
 & =e^{-\frac{t-s}{2}}i^{n}\left(\frac{d}{d\lambda}\right)^{n}\big|_{\lambda=1}\left(e^{-\frac{\lambda^{2}s}{2}}\right)\\
 & =i^{n}e^{-\frac{t-s}{2}}e^{-\frac{s}{2}}s^{\frac{n}{2}}H_{n}\left(\sqrt{s}\right)\\
 & =i^{n}e^{-\frac{t}{2}}s^{\frac{n}{2}}H_{n}\left(\sqrt{s}\right)
\end{align*}
which is the RHS in (\ref{eq:bs2}). In the calculation above, we
have used the following version of the Hermite polynomials $H_{n}\left(\cdot\right)$,
the probablist's variant; defined by

\[
\left(\frac{d}{d\xi}\right)^{n}e^{-\frac{\xi^{2}}{2}}=H_{n}\left(\xi\right)e^{-\frac{\xi^{2}}{2}}
\]
with the substitution $\xi=\sqrt{s}\lambda$ for $s>0$ fixed, and
$\lambda\rightarrow1$ $\Leftrightarrow$ $\xi\rightarrow\sqrt{s}$.

Note, the third ``$=$'' follows from standard calculation for the
generating functions of Gaussian random variables (see e.g., \cite{MR562914}):
\begin{align*}
\left(\frac{d}{d\lambda}\right)^{n}\mathbb{E}\left(e^{-i\lambda X_{s}}\right) & =\mathbb{E}\left(\left(-iX_{s}\right)^{n}e^{-i\lambda X_{s}}\right)\\
 & \Updownarrow\\
i^{n}\left(\frac{d}{d\lambda}\right)^{n}\mathbb{E}\left(e^{-i\lambda X_{s}}\right) & =\mathbb{E}\left(X_{s}^{n}e^{-i\lambda X_{s}}\right)
\end{align*}
and so 
\[
i^{n}\left(\frac{d}{d\lambda}\right)^{n}\big|_{\lambda=1}\mathbb{E}\left(e^{-i\lambda X_{s}}\right)=\mathbb{E}\left(X_{s}^{n}e^{-iX_{s}}\right).
\]
\end{proof}
\begin{acknowledgement*}
The co-authors thank the following colleagues for helpful and enlightening
discussions: Professors Daniel Alpay, Sergii Bezuglyi, Ilwoo Cho,
Wayne Polyzou, David Stewart, Eric S. Weber, and members in the Math
Physics seminar at The University of Iowa. 
\end{acknowledgement*}

\section*{Conflict of Interest Statement }

The authors did not receive support from any organization for the
submitted work. The authors have no relevant financial or non-financial
interests to disclose. 

Palle Jorgensen 

Myung-Sin Song 

James Tian

\bibliographystyle{amsalpha}
\bibliography{ref}

\end{document}